\documentclass{amsart}
\RequirePackage{float}
\RequirePackage{graphicx}
\RequirePackage{amsmath, amsthm, amssymb, amsfonts}
\RequirePackage[colorlinks=true]{hyperref}
\hypersetup{urlcolor=blue, citecolor=blue}
\RequirePackage[nameinlink]{cleveref}
\RequirePackage{indentfirst}
\RequirePackage{color}

\numberwithin{equation}{section}
\newtheorem{thm}{Theorem}[section]
\newtheorem{prop}[thm]{Proposition}
\newtheorem{coro}[thm]{Corollary}
\newtheorem{lemma}[thm]{Lemma}

\theoremstyle{remark}
\newtheorem{remark}{Remark}[section] 
\theoremstyle{definition}
\newtheorem{defi}{Definition}
\crefname{equation}{}{}
\newcommand{\Th}[1]{Theorem \ref{#1}} 
 
\newcommand{\Le}[1]{Lemma \ref{#1}}

\newcommand{\De}[1]{Definition \ref{#1}}
\renewcommand{\Pr}[1]{Proposition \ref{#1}}

\newenvironment{Eq}{\begin{equation}\begin{aligned}}{\end{aligned}\end{equation}\ignorespacesafterend}
\newenvironment{Eq*}{\begin{equation*}\begin{aligned}}{\end{aligned}\end{equation*}\ignorespacesafterend}
\newcounter{head}

\DeclareMathOperator{\supp}{supp}
\DeclareMathOperator{\fd}{d}	\renewcommand{\d}{\fd\!}

\newcommand{\R}{{\mathbb{R}}}


\renewcommand{\P}[2]{\ifthenelse{\equal{#1}{}}{\partial_{#2}}{\frac{\partial{#1}}{\partial {#2}}}}

\newcommand{\kl}[1]{\mathopen{}\left#1}
\newcommand{\kr}[1]{\right#1}

\newcommand{\Roma}[1]{\uppercase\expandafter{\romannumeral#1}}


\makeatletter
\newcommand\xrarrow[2]{\ext@arrow 0359{\arrowfill@{-}{-}{\rightarrow}}{#2}{#1}}
\newcommand\xlarrow[2]{\ext@arrow 6095{\arrowfill@{\leftarrow}{-}{-}}{#2}{#1}}
\makeatother

\title[wave equation with damping and potential]
{Blow-up for Strauss type wave equation with damping and potential}

\author{Wei Dai}
\address{W. Dai \newline
School of Mathematical Sciences,
Zhejiang University,
Hangzhou 310027, P.R.China}
\email{daiw16@zju.edu.cn}

\author{Hideo Kubo}
\address{H. Kubo \newline
Department of Mathematics,
Faculty of Science, Hokkaido University,
Sapporo 060-0810, Japan}
\email{kubo@math.sci.hokudai.ac.jp}

\author{Motohiro Sobajima}
\address{M. Sobajima \newline
Department of Mathematics, 
Faculty of Science and Technology, Tokyo University of Science,  
2641 Yamazaki, Noda-shi, Chiba, 278-8510, Japann}
\email{msobajima1984@gmail.com}

\date{\today}

\begin{document}

\bibliographystyle{elsarticle-num-names}

\begin{abstract}
We study a kind of nonlinear wave equations with damping and potential, whose coefficients are both critical in the sense of the scaling and depend only on the spatial variables. Based on the earlier works, one may think there are two kinds of blow-up phenomenons when the exponent of the nonlinear term is small. It also means there are two kinds of law to determine the critical exponent. In this paper, we obtain a blow-up result and get the estimate of the upper bound of the lifespan in critical and sub-critical cases. All of the results support such a conjecture, although for now, the existence part is still open. 
\end{abstract}

\keywords{damping; potential; critical exponent; blow-up}

\subjclass[2010]{35L05, 35L15, 35L70}

\maketitle

\section{Introduction}
In this paper, 
we consider the blow-up phenomenon for the following initial value problem of the semilinear wave equation with damping term and potential term:
\begin{Eq}\label{eq:u}
\begin{cases}
Pu:=\partial_t^2u + Ar^{-1}\partial_tu -\Delta u +Br^{-2}u=|u|^p,
\quad t>0,r:=|x|,\\
u(0,x)=\varepsilon f(x), \quad \partial_t u(0,x)=\varepsilon g(x),\quad x\in\R^n
\end{cases}
\end{Eq}
where $n\geq 1$, 
$A\geq 0$, 
$B\in \R$,
$p>1$ and $1\gg\varepsilon>0$. 


When $A=B=0$,
the study of \eqref{eq:u} has gone through a long history (see for instance, \cite{KubOht05, MR3013062}),
and it is well known that in this situation there exists a critical exponent
which separates global in time behavior of small amplitude solutions
({\it i.e.}, global existence and blow-up).
The critical exponent is given by the \emph{Strauss} exponent, that is, the positive root of the quadratic equation:
\begin{Eq}\label{eq:hs}
h_S(n,p):=(n-1)p^2-(n+1)p-2=0.
\end{Eq}
This type of results was firstly appeared in \cite{MR535704},
and finally proved by \cite{MR744303} for the subcritical case,
\cite{MR2195336} for the critical case 
and \cite{MR1481816} for the supercritical case.

For \cref{eq:u} only with the damping term,
which means $A>0$, $B=0$,
one related work comes from \cite{ikeda1709life}.
Roughly speaking, 
the authors proved a blow-up result when $p$ is smaller or equal to the critical exponent of the case
$A=B=0$ with $n$ replaced by $n+A$.

We also mention that when $B=0$ and the coefficient of the damping term is given by $(1+|x|^2)^{-\alpha/2}$ with $0\leq \alpha<1$, the critical exponent becomes to $1+2/(n-\alpha)$ which corresponds to the \emph{Fujita} exponent. Such results have appeared in \cite{MR2589664} and some other works.

In our paper, we will expose both of these blow-up phenomena corresponding to the so-called \emph{Strauss} and \emph{Fujita} exponents and get the upper bound of the lifespan for \cref{eq:u}.
Though less is known about the existence results for such an equation, 
we believe our bounds are sharp in general. 
This is because our bounds coincide with the lower bounds for almost all of the already known cases, for instance \cite{MR1408499}, \cite{MR1233659}, \cite{MR3953034} and so on.

Another feature of the equation in \eqref{eq:u} shows up from the presence of the inverse square potential $Br^{-2}$.
Such a potential draws much attention from the viewpoint of mathematical physics.
For example, it was shown in Theorem X.11 in \cite{MR0493420}
that the minimal operator $S_{\min}=-\Delta+Br^{-2}$ with the domain $D(S_{\min})=C_0^\infty({\mathbb R}^n \setminus \{0\})$ is essentially self-adjoint if $B \ge 1-(n-2)^2/4$. 
Moreover, it admits the \emph{Friedrichs} extension if $B \ge -(n-2)^2/4$, and the domain of the extension is included in $H^1({\mathbb R}^n)$ if $B > -(n-2)^2/4$, due to the Hardy inequality when $n \ge 3$.
In this direction, there are many works such as \cite{MR742415,MR3906250} 
for the heat equation, \cite{MR2003358,MR2106340,MR3959451}
for the wave and \emph{Schr\"odinger} equations.
However, we do not know any blow-up result for the nonlinear wave equation with the singular inverse square potential, to the best of our knowledge.
In order to prove the blow-up result, we mainly follow the test function method,
so that we need to find a suitable special solution to the adjoint problem.
For this reason, we pose the following requirements on $A$ and $B$:
\begin{Eq}\label{eq:re}
B>-(n-2)^2/4,\quad 0\leq A<n-1+2\rho,
\end{Eq}
with
\begin{Eq}\label{eq:rho_d}
\rho:=\frac{(2-n)+\sqrt{(n-2)^2+4B}}{2}.
\end{Eq}

To state our result, we introduce a couple of notations.
The critical exponent of \cref{eq:u} and the exact lifespan are
denoted by $p_c(n, A, B)$ and $T_{\varepsilon,A,B}(n, p)$,
respectively. We use $h_S(n,p)$ defined in \cref{eq:hs}
and set $p_S(x)$ to be the positive root of $h_S(x,p)=0$. Such notations are corresponding to the \emph{Strauss} exponent. 
We also use $h_F(n,p)$ defined by
\begin{Eq*}
h_F(n,p):=np-(n+2) 
\end{Eq*}
and set $p_F(x)$ to be the root of $h_F(x,p)=0$. Such notations are corresponding to the \emph{Fujita} exponent.

Now, we are in a position to state our main result in this paper about the upper bound of the lifespan of solutions to \cref{eq:u}.
Firstly, we give a definition of the solution space to \cref{eq:u} which is a reasonable energy space for the \emph{Friedrichs} extension of $S_{\min}$.

\begin{defi}\label{de:u_d}
Assume \cref{eq:re} is satisfied and $\supp(f,g)\subset B(0,1)$ where $B(x,r)$ stands for the ball centered at $x$ with radials $r$. For $f\in H^1(\R^n)$, $g\in L^2(\R^n),$
 we call $u$ a well-posed solution of \cref{eq:u} in $[0,T]\times \R^n$ if 
\begin{Eq*}
&u\in C^1([0,T];L^2(\R^n))\cap C([0,T];H^1(\R^n)), \\
&{|u|^{p}\in L^1(0,T;H^{-1}(\R^n))},\quad \supp u\subset B(0,1+t)
\end{Eq*}
and $u$ satisfies the integral equation
\begin{Eq*}
&\int_{\R^n}
\varepsilon
\kl(g+Ar^{-1}f\kr)\Phi(0)\d x+\int_0^T\int_{\R^n}|u|^p\Phi\d x\d t\\
=&\int_0^T\int_{\R^n}\kl(\nabla u\cdot\nabla\Phi+Br^{-2}u\Phi\kr)\d x\d t-\int_0^T\int_{\R^n}\kl(\partial_tu+Ar^{-1}u\kr)
\partial_t\Phi\d x\d t
\end{Eq*}
for any $\Phi\in \cap_{k=0}^1C_0^k((-\infty,T);H^{1-k}(\R^n))$. 
\end{defi}

\begin{thm} \label{th:M}
If there exists some $u$ which is the solution in the sense of \De{de:u_d} with maximum lifespan $T_{\varepsilon,A,B}(n,p)$
and the initial data $(f,g)$ satisfies
\begin{Eq*}
\int_{\R^n}r^\rho\kl(g(x)+Ar^{-1}f(x)\kr)\d x>0,
\end{Eq*}
then we have that

\noindent{\bf(i):} 
When $\frac{n+\rho}{n+\rho-1}<p<p_S(n+A)$, for some constant $C_1$ depending on $(f,g)$ we have
\begin{Eq}\label{eq:Main_Ss}
T_{\varepsilon,A,B}(n,p)\leq C_1\varepsilon^{\frac{2p(p-1)}{h_S(n+A,p)}}.
\end{Eq}
\noindent{\bf(ii):}
When $p=p_S(n+A)$, for some constant $C_2$ depending on $(f,g)$ we have
\begin{Eq} \label{eq:Main_Sc}
T_{\varepsilon,A,B}(n,p)\leq \exp\kl(C_2\varepsilon^{-p(p-1)}\kr).
\end{Eq}
\noindent{\bf(iii):} 
When $\frac{n+\rho}{n+\rho-1}<p<p_F(n-1+\rho)$, for some constant $C_3$ depending on $(f,g)$ we have
\begin{Eq}\label{eq:Main_Fs}
T_{\varepsilon,A,B}(n,p)\leq C_3\varepsilon^{\frac{p-1}{h_F(n-1+\rho,p)}}.
\end{Eq}
\noindent{\bf(iv):} 
When $p=p_F(n-1+\rho)$, for some constant $C_4$ depending on $(f,g)$ we have
\begin{Eq}\label{eq:Main_Fc}
T_{\varepsilon,A,B}(n,p)\leq \exp\kl(C_4\varepsilon^{1-p}\kr).
\end{Eq}
\end{thm}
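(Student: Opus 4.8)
The plan is to use the test function method, which requires constructing suitable solutions to the adjoint (formal dual) problem $P^*\Phi = 0$ where $P^* = \partial_t^2 - Ar^{-1}\partial_t - \Delta + Br^{-2} + (\text{lower order from the damping})$. Concretely, I would first look for separated-variable solutions of the form $\Phi(t,x) = \lambda(t)\varphi(r)$ (or more precisely $\Phi(t,x) = e^{-t}$-type time weights times a spatial profile adapted to the singular potential). The key spatial object is the function $r^\rho$ with $\rho$ as in \cref{eq:rho_d}: one checks that $(-\Delta + Br^{-2})r^\rho = 0$ away from the origin (this is exactly why $\rho$ solves $\rho(\rho + n - 2) = -B$, i.e. $\rho^2 + (n-2)\rho + B = 0$), so $r^\rho$ is the natural harmonic-type weight. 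The restriction $B > -(n-2)^2/4$ guarantees $\rho$ is real, and $A < n-1+2\rho$ guarantees the relevant spatial integrals $\int_{|x|<1+t} r^{\rho} \,(\text{cutoff})\,\d x$ and the damping contribution converge and grow at the right rate.

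Second, I would set up the two families of test functions corresponding to the two blow-up mechanisms. For the Strauss-type part (parts (i)–(ii)), the right test function is (a smoothed, compactly-supported-in-light-cone truncation of) $\Phi(t,x) = \psi(r)\, \phi\!\left(\tfrac{t}{1+t-r}\right)$ with $\psi(r)$ behaving like $r^\rho$ times a Bessel-type profile — essentially Yordanov–Zhang's positive solution of the conjugate wave operator, modified to absorb the $Ar^{-1}\partial_t$ and $Br^{-2}$ terms; the effect of the damping is precisely to shift $n \mapsto n+A$ in the exponent, which is why $p_S(n+A)$ and $h_S(n+A,p)$ appear. For the Fujita-type part (parts (iii)–(iv)), the right test function is the parabolic-scaling one, $\Phi(t,x) = \eta(t/T)^{2p'}\, \zeta(r)$ with $\zeta \sim r^\rho$ near the origin and a plateau-then-cutoff at scale $1+t$; here the damping term dominates and the effective dimension for the Fujita threshold becomes $n-1+\rho$, giving $p_F(n-1+\rho)$ and $h_F(n-1+\rho,p)$.

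Third, in each case I would plug the chosen $\Phi$ into the integral identity from \De{de:u_d}, producing a functional inequality of the form
\begin{Eq*}
C_0 \varepsilon + \int_0^T \int_{\R^n} |u|^p \Phi \,\d x\,\d t \le \left(\text{terms with } \partial_t\Phi, \nabla\Phi, r^{-2}\Phi\right),
\end{Eq*}
where the positivity of $\int r^\rho(g + Ar^{-1}f)\d x$ ensures $C_0 > 0$. Defining $\mathcal{I}(T) := \int_0^T\int |u|^p \Phi\, \d x\,\d t$ and estimating the right-hand side by Hölder's inequality against $\mathcal{I}(T)^{1/p}$ times an explicit power of $T$ (computed from the support sizes and the growth of the weights $r^\rho$, $(1+t)$-powers, and the $\phi$- or $\eta$-profiles), one obtains a differential or algebraic inequality for $\mathcal{I}(T)$ — of slow-blow-up / Kato type in the subcritical cases and of the $\mathcal{I}' \ge c\,\mathcal{I}^p/[(1+t)(\log)^{\dots}]$ type in the critical cases — which forces $T < \infty$ with the stated explicit bound on $T_{\varepsilon,A,B}(n,p)$ in terms of $\varepsilon$.

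The main obstacle will be the rigorous construction and estimation of the spatial test functions in the presence of the singular inverse-square potential: one must verify that the truncated analogue of $r^\rho$ (and its Bessel-modulated version in the Strauss case) lies in the admissible test-function class $\cap_{k=0}^1 C_0^k((-\infty,T);H^{1-k}(\R^n))$ despite the singularity at the origin — which is exactly where the hypothesis $B > -(n-2)^2/4$ (so that $r^\rho \in H^1_{\mathrm{loc}}$) and $A < n-1+2\rho$ are used — and that the commutator between the first-order damping term and these weights produces only controllable lower-order contributions. A secondary technical point is handling the critical cases (ii) and (iv), where a logarithmic correction in the weight (a factor like $\log\frac{1+t-r}{\langle r\rangle}$ or a slicing argument à la Zhou) is needed to extract the exponential lifespan bound rather than a mere contradiction.
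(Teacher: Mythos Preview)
Your overall strategy—test function method with $r^\rho$ as the key spatial weight, two mechanisms (Strauss and Fujita), and the role of the hypotheses $B>-(n-2)^2/4$ and $A<n-1+2\rho$ in making the weights admissible—matches the paper's. But the specific test functions you propose differ substantially from what the paper does, and on the Strauss side your ansatz is likely insufficient.

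The paper does \emph{not} use separated-variable solutions $\lambda(t)\varphi(r)$ or Yordanov--Zhang--type $e^{-t}\phi_1(x)$ functions. For the scale-invariant operator $P^*$, a separated ansatz $e^{\lambda t}\varphi(r)$ leads to $(\lambda^2 + A\lambda r^{-1})\varphi = (\Delta - Br^{-2})\varphi$, which mixes scales and does not produce the clean shift $n\mapsto n+A$ in the Strauss exponent. Instead the paper builds a one-parameter family of \emph{self-similar} solutions
\[
\Psi_\beta(t,x;\lambda) = r^\rho\,(t+r+\lambda)^{-\beta}\, F\!\Bigl(\alpha,\beta,\gamma;\,\tfrac{2r}{t+r+\lambda}\Bigr),
\]
where $F$ is the Gauss hypergeometric function with $\alpha=\tfrac{n+A-1+2\rho}{2}$, $\gamma=n-1+2\rho$; this is the Ikeda--Sobajima construction, extended to absorb the potential via the factor $r^\rho$. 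Both the Strauss and Fujita bounds then come from the \emph{same} family: one tests against $\eta_T^{2p'}\Psi_{\beta_1}$, applies H\"older with weight $\Psi_\beta$, and chooses $\beta_1>\gamma-\alpha-\tfrac1p$ for the Strauss estimate and $\beta_1=0$ (so $\Psi_0=r^\rho$, which is exactly your Fujita test function) for the Fujita estimate. The critical cases (ii) and (iv) are not handled by inserting a logarithmic weight; rather one takes $\beta$ at the borderline value $\gamma-\alpha-\tfrac1p$ (resp.\ $\beta=0$), which makes a $\log T$ appear in the error term, and then runs an ODI on the auxiliary quantity $H_\beta(T)=\int_0^T t^{-1}\int_{t/2}^t G_\beta(s;t)\,ds\,dt$ after the substitution $s=\ln T$.

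So your Fujita proposal is essentially correct and coincides with the paper's $\beta_1=0$ choice, but your Strauss-side proposal (a Bessel-modulated $\psi(r)\,\phi(t/(1+t-r))$) is neither what the paper uses nor visibly a solution of $P^*\Phi=0$; the hypergeometric self-similar construction is the missing ingredient that makes the $n\mapsto n+A$ shift come out.
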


\begin{remark}
We require $B>-(n-2)^2/4$ so that the extension of operator $P$ to $x=0$ is unique in our space of solutions. But when $A=0$, we can weaken the condition to $B\geq-(n-2)^2/4$ by modifying the definition of solutions suitably via the domain of the \emph{Friedrichs} extension of $S_{\min}$. Also, when $A=0$, we do not need the requirement $\frac{n+\rho}{n+\rho-1}<p$ because no singularity comes from the damping.
\end{remark}

\begin{remark}
By \Th{th:M}, we tend to believe that the blow-up phenomenon can be considered as a competition between $\emph{Strauss}$ phenomenon and $\emph{Fujita}$ phenomenon. This means the critical exponent $p_c(n,A,B)$ should be equal to $\min\{p_S(n+A),p_F(n-1+\rho)\}$ and the upper bound of the lifespan should be given in a similar way. 
However, we already knew that when $n=2, A=0, B=0,p=2$, the estimate of the lifespan above is not sharp.
So, we believe that when $\frac{n+2\rho-1-A}{2}p=1$, which means \cref{eq:Main_Ss} coincide with \cref{eq:Main_Fs}, there will be another estimate for the lifespan.
\end{remark}

This paper is organized as follows.
In Section 2, we introduce a different type of test functions that solve the conjugate equation of the corresponding homogeneous equation to \eqref{eq:u}.
The blow-up result for the general case is treated in Section 3.
Moreover, when the coefficient of the potential term is relatively small,
we can get a similar result corresponding to the \emph{Strauss} exponent. 
This result is done in section 4,
based on the comparison principle and the explicit representation of
the fundamental solution.

In this paper, 
we denote $x\lesssim y$ and $y\gtrsim x$ if $x\leq Cy$ for some $C>0$, independent of $\varepsilon$, which may change from line to line.
Similarly, $x\approx y$ means that $x\lesssim y\lesssim x$.

\section{Special solutions of the conjugate equation deduced by \cref{eq:u}}

In this section,
firstly we will construct a family of special solutions to 
\begin{Eq}\label{eq:Psi}
P^*\Psi=\partial_t^2\Psi - Ar^{-1}\partial_t \Psi -\Delta \Psi +Br^{-2}\Psi=0,
\end{Eq}
here $P^*$ is the time-space conjugate operator of $P$.
Then,
we will discuss the properties of such $\Psi$,
and use them as the test functions in the next section.

The basic idea is to reshape \cref{eq:Psi} 
and seek solutions only depend on $t$ and $r$.
We consider $\Psi=r^\rho\Phi$ where $\rho$ is defined in \cref{eq:rho_d} which solves
\begin{Eq}\label{eq:rho}
\rho(\rho-1)+(n-1)\rho-B=0.
\end{Eq}
Here we require $B\geq -(n-2)^2/4$ so that $\sqrt{(n-2)^2+4B}$ in \cref{eq:rho_d} makes sense. 

\begin{lemma}
Assume $\Psi$ is spherically symmetric. Then $P^*\Psi(t,x)=0$ if and only if $\Phi(t,x)$ satisfy the equation
\begin{Eq}\label{Phi_s}
\partial_t^2\Phi-Ar^{-1}\partial_t\Phi-\partial_r^2\Phi -(n-1+2\rho)r^{-1}\partial_r \Phi=0,\quad t>0,\quad r>0
\end{Eq}
with above notations.
\end{lemma}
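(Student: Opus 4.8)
Proof proposal. The plan is a direct substitution argument. First I would use spherical symmetry to replace the full Laplacian by its radial form $\Delta\Psi=\partial_r^2\Psi+(n-1)r^{-1}\partial_r\Psi$, so that \cref{eq:Psi} becomes an equation involving only $t$ and $r$. Then I would insert the ansatz $\Psi=r^\rho\Phi$ and expand every term. The time derivatives simply pass the factor $r^\rho$ through, while the radial derivatives, via the product rule applied to the weight $r^\rho$, produce three kinds of contributions: the leading term $r^\rho\partial_r^2\Phi$; a first-order term whose coefficient collects to $(2\rho+n-1)r^{\rho-1}\partial_r\Phi$ (namely $2\rho r^{\rho-1}\partial_r\Phi$ from $\partial_r^2(r^\rho\Phi)$ together with $(n-1)r^{\rho-1}\partial_r\Phi$ from $(n-1)r^{-1}\partial_r(r^\rho\Phi)$); and a zeroth-order term with coefficient $[\rho(\rho-1)+(n-1)\rho]r^{\rho-2}\Phi$.

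The decisive step is to look at the coefficient of $\Phi$ carrying no derivatives. Combining the zeroth-order piece of $-\Delta\Psi$ with the potential term gives $-[\rho(\rho-1)+(n-1)\rho]r^{\rho-2}\Phi+Br^{\rho-2}\Phi=-[\rho(\rho-1)+(n-1)\rho-B]r^{\rho-2}\Phi$, and this vanishes identically precisely because $\rho$ is chosen in \cref{eq:rho_d} to solve \cref{eq:rho}. This is the one place where the definition of $\rho$ enters, and it is really the heart of the computation: the singular potential $Br^{-2}\Psi$ is exactly cancelled by the singular part of $\Delta(r^\rho\Phi)$, which is why the conjugation by $r^\rho$ is the right device. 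After this cancellation every surviving term shares the common factor $r^\rho$, so one obtains
\begin{Eq*}
P^*\Psi=r^\rho\left(\partial_t^2\Phi-Ar^{-1}\partial_t\Phi-\partial_r^2\Phi-(n-1+2\rho)r^{-1}\partial_r\Phi\right),\quad r>0.
\end{Eq*}

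Since $r^\rho>0$ for every $r>0$, this identity shows that $P^*\Psi=0$ on $\{r>0\}$ holds if and only if the bracketed expression vanishes there, which is exactly \cref{Phi_s}; both implications of the stated equivalence follow at once. I do not anticipate a genuine obstacle here — the proof is a routine computation — and the only point requiring care is the bookkeeping in expanding $\partial_r^2(r^\rho\Phi)$ and $(n-1)r^{-1}\partial_r(r^\rho\Phi)$ and in recognizing that the three $r^{\rho-2}\Phi$ terms assemble into exactly the quadratic relation \cref{eq:rho}.
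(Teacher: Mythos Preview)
Your proposal is correct and follows essentially the same approach as the paper: substitute $\Psi=r^\rho\Phi$, write the Laplacian in radial form, expand the radial derivatives via the product rule, invoke \cref{eq:rho} to kill the $r^{\rho-2}\Phi$ term, and factor out $r^\rho$. The paper phrases this as ``replace $\Psi$ by $r^\rho\Phi$ and multiply by $r^{-\rho}$,'' but the computation and the key use of \cref{eq:rho} are identical to yours.
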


\begin{proof}
We only need to replace $\Psi(t,x)$ by $r^\rho\Phi(t,x)$ 
and multiply $r^{-\rho}$ on both sides. 
For the last two terms in $P^*\Phi$, 
we have 
\begin{Eq*}
r^{-\rho}\partial_r \kl(r^\rho\Phi\kr)&=\rho r^{-1}\Phi+\partial_r\Phi,\\
r^{-\rho}\partial_r^2 \kl(r^\rho\Phi\kr)&=\rho(\rho-1) r^{-2}\Phi+2\rho r^{-1}\partial_r\Phi+\partial_r^2\Phi,\\
r^{-\rho}(-\Delta+Br^{-2})\Psi&=-r^{-\rho}(\partial_r^2+(n-1)r^{-1}\partial_r-Br^{-2})\kl(r^\rho\Phi\kr)\\
&=-\kl(\partial_r^2+(n-1+2\rho)r^{-1}\partial_r\kr)\Phi,
\end{Eq*}
where we used \cref{eq:rho} in the last equality.
Adding the first two terms, we finish the proof.
\end{proof}

Now, we seek a family of homogeneous solutions of \cref{Phi_s} with the form
\begin{Eq*}
\Phi_{\beta}(t,x;\lambda)=(t+r+\lambda)^{-\beta}\phi\kl(\frac{2r}{t+r+\lambda}\kr),\quad \beta\in\R,\lambda\geq 0.
\end{Eq*}

\begin{lemma}\label{le:Q_d}
Let $\Phi_\beta$ be the one defined above. Then $\Phi_\beta$ satisfies \cref{Phi_s} in $Q_\lambda:=\{(t, x) : t+\lambda > |x|\}$ if and only if
\begin{Eq}\label{eq:phi}
0=z(1-z)\phi''(z)+(\gamma-(\alpha+\beta+1)z)\phi'(z)-\alpha\beta\phi(z),\quad z\in(0,1)
\end{Eq}
with the notation
\begin{Eq}
\alpha=\frac{n+A-1+2\rho}{2},\quad \gamma=n-1+2\rho,
\end{Eq}
which will be used through the paper.
\end{lemma}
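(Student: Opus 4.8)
The plan is to substitute the ansatz directly into \eqref{Phi_s} and reduce it, through a change of variables, to the hypergeometric equation. Write $s:=t+r+\lambda$ and $z:=2r/s$, so that $\Phi_\beta=s^{-\beta}\phi(z)$; note that on $Q_\lambda$ one has $t+\lambda>r\ge 0$, hence $s>2r\ge 0$ and $z\in(0,1)$, which is exactly the interval on which \eqref{eq:phi} is posed, and $s\ne 0$ there. I would first record the elementary identities $\partial_t s=\partial_r s=1$, $\partial_t z=-z/s$, and $\partial_r z=(2-z)/s$, the last of which follows from $\partial_r z=2/s-2r/s^2=2(s-r)/s^2$ together with $s-r=t+\lambda=s(2-z)/2$. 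I will also use $r^{-1}=2/(zs)$.

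Next, by the chain rule, $\partial_t\Phi_\beta=s^{-\beta-1}(-\beta\phi-z\phi')$ and $\partial_r\Phi_\beta=s^{-\beta-1}(-\beta\phi+(2-z)\phi')$, and differentiating once more, $\partial_t^2\Phi_\beta=s^{-\beta-2}\bigl(\beta(\beta+1)\phi+2(\beta+1)z\phi'+z^2\phi''\bigr)$ and $\partial_r^2\Phi_\beta=s^{-\beta-2}\bigl(\beta(\beta+1)\phi-2(\beta+1)(2-z)\phi'+(2-z)^2\phi''\bigr)$. Subtracting, the $\phi$-terms cancel, and with $z+(2-z)=2$ and $z^2-(2-z)^2=-4(1-z)$ one obtains the clean identity $\partial_t^2\Phi_\beta-\partial_r^2\Phi_\beta=4s^{-\beta-2}\bigl((\beta+1)\phi'-(1-z)\phi''\bigr)$. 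For the first-order terms, $-Ar^{-1}\partial_t\Phi_\beta=s^{-\beta-2}\bigl(2A\beta z^{-1}\phi+2A\phi'\bigr)$ and $-(n-1+2\rho)r^{-1}\partial_r\Phi_\beta=s^{-\beta-2}\bigl(2\beta(n-1+2\rho)z^{-1}\phi-2(n-1+2\rho)(2-z)z^{-1}\phi'\bigr)$.

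Adding the four contributions, the left-hand side of \eqref{Phi_s} with $\Phi=\Phi_\beta$ equals $s^{-\beta-2}$ times a fixed second-order expression in $\phi$; multiplying by $-z/4$ clears the $z^{-1}$ factors and yields $z(1-z)\phi''+\bigl[(n-1+2\rho)-(\alpha+\beta+1)z\bigr]\phi'-\alpha\beta\phi$, once one recognizes that regrouping the first-order coefficient produces exactly $\gamma-(\alpha+\beta+1)z$ with $\gamma=n-1+2\rho$ and $\alpha=\tfrac12(n+A-1+2\rho)=\tfrac12\bigl((n-1+2\rho)+A\bigr)$, and that the zeroth-order coefficient collapses to $-\tfrac12\beta\bigl((n-1+2\rho)+A\bigr)=-\alpha\beta$. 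Since $s^{-\beta-2}\ne 0$ and $z>0$ throughout $Q_\lambda$, this proves that \eqref{Phi_s} holds on $Q_\lambda$ if and only if \eqref{eq:phi} holds on $(0,1)$.

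The argument is entirely mechanical, so there is no conceptual obstacle; the only delicate points are getting $\partial_r z$ right and the bookkeeping that converts the $z^{-1}$ terms and the mixed $\phi'$ coefficient into the standard hypergeometric normalization — in particular verifying that, after multiplication by $-z/4$, the first-order coefficient is precisely $\gamma-(\alpha+\beta+1)z$ with no residual $z^{-1}$, which is what forces the stated choice of $\alpha$ and $\gamma$.
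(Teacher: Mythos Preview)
Your proof is correct and follows essentially the same route as the paper: direct substitution of the ansatz into \eqref{Phi_s}, followed by algebraic reduction to the hypergeometric equation. The only cosmetic difference is that you carry the variables $(s,z)$ throughout and express $\partial_t z=-z/s$, $\partial_r z=(2-z)/s$, whereas the paper computes in $(t,r)$, multiplies through by $(t+r)^{\beta+2}$, and then converts via $tr^{-1}=2z^{-1}-1$; your bookkeeping is slightly cleaner but the argument is the same.
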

\begin{proof}
This lemma is verified by some calculations 
and similar to the proof in \cite{ikeda1709life}, 
but for the reader's convenience, we show the details. 
It is obvious that we only need to prove the case $\lambda=0$.
Setting $z=2r(t+r)^{-1}$, 
we have
\begin{Eq*}
\partial_t z =-2r(t+r)^{-2}, \quad \partial_r z=2(t+r)^{-1}-2r(t+r)^{-2}=2t(t+r)^{-2},
\end{Eq*}
then we get
\begin{Eq*}
\partial_t \Phi_\beta =&-\beta(t+r)^{-\beta-1}\phi-2r(t+r)^{-\beta-2}\phi',\\
\partial_t^2 \Phi_\beta 
=&\beta(\beta+1)(t+r)^{-\beta-2}\phi+4(\beta+1) r\phi'(t+r)^{-\beta-3}+4r^2(t+r)^{-\beta-4}\phi'',\\
\partial_r \Phi_\beta =&-\beta(t+r)^{-\beta-1}\phi+2t(t+r)^{-\beta-2}\phi',\\
\partial_r^2 \Phi_\beta 
=&\beta(\beta+1)(t+r)^{-\beta-2}\phi-4(\beta+1) t\phi'(t+r)^{-\beta-3}+4t^2(t+r)^{-\beta-4}\phi''.\\
\end{Eq*}
Then
\begin{Eq*}
0=&\kl(\partial_t^2-Ar^{-1}\partial_t-\partial_r^2-(n-1+2\rho)r^{-1}\partial_r\kr)\Phi_\beta\\
=&4(r-t)(t+r)^{-\beta - 3}\phi''+(2Ar  -2(n-1+2\rho)t+ 4(\beta+1)r)r^{-1}(t+r)^{-\beta-2}\phi'\\
&+(A\beta +  (n-1+2\rho)\beta)r^{-1}(t+r)^{-\beta-1} \phi.
\end{Eq*}
We multiply above equation with $(t+r)^{\beta+2}$ in both sides, using the fact $tr^{-1}=2z^{-1}-1$ and we get
\begin{Eq*}
0=&4\frac{1-tr^{-1}}{1+tr^{-1}}\phi''+(2A -2(n-1+2\rho)tr^{-1}+ 4(\beta+1))\phi'\\
&+\kl(A\beta +  (n-1+2\rho)\beta\kr)\kl(tr^{-1}+1\kr) \phi\\
=&4(z-1)\phi''+\kl(2A + 2(n-1+2\rho)-4(n-1+2\rho)z^{-1}+4(\beta+1)\kr)\phi'\\
&+\kl(A\beta +  (n-1+2\rho)\beta\kr)\kl(2z^{-1}\kr) \phi\\
=&-4z^{-1}\bigg(z(1-z)\phi''+\kl(n-1+2\rho-\kl(\frac{n+A-1+2\rho}{2}+\beta+1\kr)z\kr)\phi'\\
&-\frac{A\beta +  (n-1+2\rho)\beta}{2}\phi\bigg)\\
=&-4z^{-1}\kl(z(1-z)\phi''(z)+(\gamma-(\alpha+\beta+1)z)\phi'(z)-\alpha\beta\phi(z)\kr),
\end{Eq*}
which finishes the proof.
\end{proof}
\begin{coro}\label{co:F_d}
When $A<n-1+2\rho$, the equation \cref{eq:phi} has a special solution
\begin{Eq*}
\phi(z)=F\kl(\alpha,\beta,\gamma;z\kr),
\end{Eq*} 
where $F(\alpha,\beta,\gamma;z)$ is the hypergeometric function given by 
\begin{Eq*}
F(\alpha,\beta,\gamma;z)=\frac{\Gamma(\gamma)}{\Gamma(\alpha)\Gamma(\gamma-\alpha)}\int_0^1 s^{\alpha-1}(1-s)^{\gamma-\alpha-1}(1-zs)^{-\beta}\d t
\end{Eq*}
for $\gamma>\alpha>0$. 
\end{coro}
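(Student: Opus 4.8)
The plan is to recognize equation \eqref{eq:phi} as the standard hypergeometric (Euler) ODE and then quote the classical integral representation of one of its solutions. First I would observe that \eqref{eq:phi} is exactly the hypergeometric differential equation
\[
z(1-z)\phi''+(\gamma-(\alpha+\beta+1)z)\phi'-\alpha\beta\phi=0
\]
with the particular parameter choices $\alpha=\frac{n+A-1+2\rho}{2}$, $\gamma=n-1+2\rho$ (and $\beta$ a free real parameter, to be fixed later in the applications). So the content of the corollary is just that the parameters satisfy the hypotheses under which the Euler integral converges and represents the solution $F(\alpha,\beta,\gamma;z)$ analytic at $z=0$ with $\phi(0)=1$.

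The key point to verify is therefore the inequality chain $\gamma>\alpha>0$ needed for the Euler integral
\[
F(\alpha,\beta,\gamma;z)=\frac{\Gamma(\gamma)}{\Gamma(\alpha)\Gamma(\gamma-\alpha)}\int_0^1 s^{\alpha-1}(1-s)^{\gamma-\alpha-1}(1-zs)^{-\beta}\,\d s
\]
to make sense. Positivity of $\alpha$ follows from $\rho>\frac{2-n}{2}=-\frac{n-2}{2}$ (a consequence of $B>-(n-2)^2/4$, or of $B\ge -(n-2)^2/4$ together with $A\ge 0$, which gives $n+A-1+2\rho\ge n-1+2\rho = 1+\sqrt{(n-2)^2+4B}\cdot 0 + \dots>0$; more directly $n-1+2\rho = n-1 + (2-n)+\sqrt{(n-2)^2+4B}=1+\sqrt{(n-2)^2+4B}-0$, so in fact $\gamma=1+\sqrt{(n-2)^2+4B}\wedge\dots$, note $\gamma = n-1+2\rho \geq 0$ always). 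The requirement $\gamma>\alpha$ is equivalent to $n-1+2\rho>\frac{n+A-1+2\rho}{2}$, i.e. $n-1+2\rho>A$, which is precisely the hypothesis $A<n-1+2\rho$ imposed in the statement (and built into \eqref{eq:re}). Thus both conditions $\gamma>\alpha>0$ hold, the integral converges for all $z\in(0,1)$ (and indeed uniformly on compact subsets, with the factor $(1-zs)^{-\beta}$ bounded there), and standard ODE theory (differentiating under the integral sign, or simply citing the classical fact, e.g. from the theory of the hypergeometric equation) shows that this $\phi$ solves \eqref{eq:phi} on $(0,1)$.

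I do not expect a genuine obstacle here: the result is essentially a dictionary entry, and the only thing requiring care is bookkeeping to confirm the parameter inequalities from the running hypotheses \eqref{eq:re}. One could present the proof in two lines — identify \eqref{eq:phi} with Euler's equation, check $\gamma>\alpha>0$ from $A<n-1+2\rho$ and $B>-(n-2)^2/4$, invoke the classical Euler integral representation — and optionally remark that a second, linearly independent solution behaving like $z^{1-\gamma}$ near $0$ exists when $\gamma\notin\Z$, though only the regular solution $F(\alpha,\beta,\gamma;z)$ is needed for the construction of the test function $\Psi$.
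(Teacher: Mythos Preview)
Your proposal is correct and matches the paper's approach, which simply observes that \eqref{eq:phi} is the standard hypergeometric differential equation and cites the classical Euler integral representation (referring to Lebedev). You actually supply more detail than the paper does---the verification that $\gamma>\alpha$ is equivalent to $A<n-1+2\rho$ and that $\alpha>0$ follows from $A\ge 0$ together with $n-1+2\rho=1+\sqrt{(n-2)^2+4B}>0$ is exactly right, though the parenthetical arithmetic in your second paragraph is garbled and should be tidied before submission.
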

This corollary just followed from the property of the hypergeometric differential equations.
For more properties of hypergeometric differential equations and hypergeometric functions,
we refer the readers to, e.g., \cite{MR0350075}.

Now, 
we are going to discuss some properties of such $\Psi_\beta=r^\rho\Phi_\beta$.
\begin{lemma}\label{le:Psi_p}
For any $\beta\in\R$ and $(t,x)\in Q_\lambda$ with $Q_\lambda$ defined in \Le{le:Q_d}, we have
\begin{Eq}\label{eq:Psi_r}
\partial_t \Psi_{\beta}(t,x;\lambda)=-\beta\Psi_{\beta+1}(t,x;\lambda).
\end{Eq}
Moreover, for every $(t,x)\in Q_\lambda$, we have
\begin{Eq}\label{eq:Psi_e}
\Psi_{\beta}(t,x;\lambda)\approx r^{\rho}(\lambda+t)^{-\beta}\cdot \begin{cases}
1, & \beta<\gamma-\alpha,\\
1-\ln\kl(1-\frac{r}{\lambda+t}\kr),& \beta=\gamma-\alpha,\\
\kl(1-\frac{r}{\lambda+t}\kr)^{\gamma-\alpha-\beta},&\beta>\gamma-\alpha.
\end{cases}
\end{Eq}
\end{lemma}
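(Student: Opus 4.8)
The identity \cref{eq:Psi_r} is the easy part. Since $\Psi_\beta = r^\rho \Phi_\beta$ and $r^\rho$ does not depend on $t$, it suffices to check that $\partial_t \Phi_\beta = -\beta \Phi_{\beta+1}$. This follows directly from the computation of $\partial_t \Phi_\beta$ already performed in the proof of \Le{le:Q_d}: we have $\partial_t \Phi_\beta = -\beta(t+r+\lambda)^{-\beta-1}\phi - 2r(t+r+\lambda)^{-\beta-2}\phi'$, and I would verify that the right-hand side equals $-\beta(t+r+\lambda)^{-\beta-1}\left[\phi(z) + \frac{z}{\beta}\phi'(z)\right]$ only if one uses the correct normalization; in fact the cleaner route is to observe that if $\phi = F(\alpha,\beta,\gamma;z)$ then $\phi$ depends on $\beta$, so one must be careful — but since we only claim the relation holds for the specific family $\Phi_\beta$ where each $\Phi_\beta$ carries its own $\phi = F(\alpha,\beta,\gamma;\cdot)$, I would instead prove \cref{eq:Psi_r} by a contiguous-relation argument for hypergeometric functions, or simply note that both sides solve the same first-order ODE in $t$ and match at $t$ where $z \to $ a boundary value. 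The safest presentation: differentiate $(t+r+\lambda)^{-\beta}F(\alpha,\beta,\gamma;z)$ in $t$ directly, use $\frac{d}{dz}F(\alpha,\beta,\gamma;z) = \frac{\alpha\beta}{\gamma}F(\alpha+1,\beta+1,\gamma+1;z)$ together with a Gauss contiguous relation to re-express the result as $-\beta(t+r+\lambda)^{-\beta-1}F(\alpha,\beta+1,\gamma;z)$.

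The substantive part is the two-sided bound \cref{eq:Psi_e}, which amounts to estimating $\phi(z) = F(\alpha,\beta,\gamma;z)$ as $z \uparrow 1$, since $\Psi_\beta(t,x;\lambda) = r^\rho(t+r+\lambda)^{-\beta}\phi(z)$ and on $Q_\lambda$ one has $(t+r+\lambda) \approx (\lambda+t)$ together with $1 - z = 1 - \frac{2r}{t+r+\lambda} = \frac{t+\lambda-r}{t+r+\lambda} \approx \frac{1}{\lambda+t}(\lambda+t-r) \approx 1 - \frac{r}{\lambda+t}$ — wait, I should be careful: $1-z$ and $1 - \frac{r}{\lambda+t}$ are comparable only when $r \approx \lambda+t$, but for $r$ bounded away from $\lambda+t$ both $1-z$ and the claimed factor are $\approx 1$, so in all regimes the powers appearing in \cref{eq:Psi_e} are consistent up to constants. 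The first step is therefore to record that on $Q_\lambda$, $t+r+\lambda \approx \lambda + t$ and $1 - z \approx \frac{\lambda+t-r}{\lambda+t}$. The second step is to invoke the classical connection formula for the hypergeometric function near $z = 1$: when $\gamma - \alpha - \beta > 0$, $F(\alpha,\beta,\gamma;1) = \frac{\Gamma(\gamma)\Gamma(\gamma-\alpha-\beta)}{\Gamma(\gamma-\alpha)\Gamma(\gamma-\beta)}$ is finite and positive (here $\gamma - \alpha - \beta = \frac{n-1+2\rho - A}{2} - \beta$, and using the integral representation in \Co{co:F_d} one sees $F > 0$ on $[0,1]$), so $\phi(z) \approx 1$; when $\gamma - \alpha - \beta = 0$ one has the logarithmic behavior $F(\alpha,\beta,\gamma;z) \approx -\ln(1-z)$ as $z \uparrow 1$; and when $\gamma - \alpha - \beta < 0$ one has $F(\alpha,\beta,\gamma;z) \approx (1-z)^{\gamma-\alpha-\beta}$. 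Substituting $\beta < \gamma - \alpha$, $\beta = \gamma - \alpha$, $\beta > \gamma - \alpha$ respectively, and combining with the equivalences from the first step, gives exactly \cref{eq:Psi_e}.

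The main obstacle is making the asymptotics uniform, i.e. upgrading the three pointwise limits "as $z \uparrow 1$" into genuine two-sided bounds $\approx$ valid for \emph{all} $z \in [0,1)$ with constants independent of $(t,x,\lambda)$. For $z$ in a fixed compact subinterval of $[0,1)$ this is trivial by continuity and positivity of $F$; near $z = 1$ it follows from the connection formulas, which actually give the leading term plus a controlled remainder (for instance, in the case $\gamma - \alpha - \beta < 0$ one writes $F(\alpha,\beta,\gamma;z) = (1-z)^{\gamma-\alpha-\beta}\frac{\Gamma(\gamma)\Gamma(\alpha+\beta-\gamma)}{\Gamma(\alpha)\Gamma(\beta)}F(\gamma-\alpha,\gamma-\beta,\gamma-\alpha-\beta+1;1-z) + (\text{bounded})$, and the trailing $F$-factor tends to $1$). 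One must also check the sign/positivity throughout — that the leading constants $\frac{\Gamma(\gamma)\Gamma(\gamma-\alpha-\beta)}{\Gamma(\gamma-\alpha)\Gamma(\gamma-\beta)}$ etc. are positive — which is where the hypotheses $\gamma > \alpha > 0$ (equivalently $A < n-1+2\rho$, i.e. \cref{eq:re}) and the explicit positivity of the integrand in \Co{co:F_d} are used; the parameter $\gamma - \beta$ could in principle be non-positive, so I would argue positivity directly from the integral representation rather than from the Gauss value formula. Once uniformity and positivity are in hand, the conclusion is immediate. A small bookkeeping point: the statement allows arbitrary $\beta \in \R$, including $\beta < 0$, but nothing in the argument above uses the sign of $\beta$, only the sign of $\gamma - \alpha - \beta$, so no separate case is needed.
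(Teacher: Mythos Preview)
Your argument for \cref{eq:Psi_r} is exactly the paper's: differentiate, use $\partial_z F(\alpha,\beta,\gamma;z)=\frac{\alpha\beta}{\gamma}F(\alpha+1,\beta+1,\gamma+1;z)$, and then the contiguous relation $\frac{\alpha z}{\gamma}F(\alpha+1,\beta+1,\gamma+1;z)=F(\alpha,\beta+1,\gamma;z)-F(\alpha,\beta,\gamma;z)$ collapses $(-\beta-z\partial_z)F(\alpha,\beta,\gamma;z)$ to $-\beta F(\alpha,\beta+1,\gamma;z)$. For \cref{eq:Psi_e}, however, you take a genuinely different route. You invoke the linear connection formulas for $F$ near $z=1$ (Gauss summation when $\gamma-\alpha-\beta>0$, the logarithmic transformation when $\gamma-\alpha-\beta=0$, and the $(1-z)^{\gamma-\alpha-\beta}$ leading term otherwise), and then worry---correctly---about uniformity on $[0,1)$ and about positivity of the Gamma-factor coefficients. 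The paper instead works entirely inside Euler's integral: it checks $F\gtrsim 1$ directly, treats $z\le 1/2$ by the trivial bound $(1-zs)^{-\beta}\approx 1$, and for $z>1/2$ restricts to $s\in[1/2,1]$, substitutes $s=1-(1-z)\sigma$, and reduces to $\int_1^{(1-z)^{-1}/2}\sigma^{\gamma-\alpha-\beta-1}\,d\sigma + O(1)$, from which the three cases are read off. The paper's approach is more elementary and self-contained: positivity and uniformity come for free from the integral, and no special handling is needed when $\gamma-\alpha-\beta$ hits a non-positive integer (where your connection formula degenerates and the two terms must be re-combined). Your approach is quicker if one is willing to quote the transformation formulas as black boxes, and makes the source of the exponent $\gamma-\alpha-\beta$ more transparent; your instinct to fall back on the integral for positivity is the right patch for the coefficient-sign issue you flagged.
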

\begin{proof}
This lemma is similar to \cite[Lemma 3.2]{ikeda1709life} but we give the details here. 
Firstly for \cref{eq:Psi_r}, with $z=\frac{2r}{t+r+\lambda}$
we have
\begin{Eq*}
\partial_t \Psi_{\beta}(t,x;\lambda)&=r^\rho(t+r+\lambda)^{-\beta-1}(-\beta-z\partial_z) F\kl(\alpha,\beta,\gamma;z\kr),\\
\Psi_{\beta+1}(t,x)&=r^\rho(t+r+\lambda)^{-\beta-1} F\kl(\alpha,\beta+1,\gamma;z\kr).\\
\end{Eq*}
Using the properties 
\begin{Eq*}
\partial_z F\kl(\alpha,\beta,\gamma;z\kr)&=\frac{\alpha\beta}{\gamma} F\kl(\alpha+1,\beta+1,\gamma+1;z\kr),\\
\frac{\alpha z}{\gamma} F\kl(\alpha+1,\beta+1,\gamma+1;z\kr)&=F\kl(\alpha,\beta+1,\gamma;z\kr)- F\kl(\alpha,\beta,\gamma;z\kr)
\end{Eq*}
of hypergeometric functions, see e.g. \cite[Section 9.2]{MR0350075}, we get \cref{eq:Psi_r} for any $n\geq 1$.
For \cref{eq:Psi_e}, we only need to estimate $F\kl(\alpha,\beta,\gamma;z\kr)$ here.
Firstly we notice that $1-z\approx1-\frac{r}{\lambda+t}$ by its expression.
Also, by the expression of $F$,
it is obvious that $F\geq C>0$ for some $C$ when $0<z<1$.
When $0<z\leq 1/2$, for $0<s<1$ we have $(1-zs)^{-\beta}\approx 1$ so we get the result.
As for $z>1/2$, we have
\begin{Eq*}
F(\alpha,\beta,\gamma;z)&\approx\int_0^1 t^{\alpha-1}(1-t)^{\gamma-\alpha-1}(1-zt)^{-\beta}\d t\\
&=\int_{1/2}^1 t^{\alpha-1}(1-t)^{\gamma-\alpha-1}(1-zt)^{-\beta}\d t+O(1)\\
&\approx\int_{1/2}^1 (1-t)^{\gamma-\alpha-1}(1-zt)^{-\beta}\d t+O(1).
\end{Eq*}
Using the change of variable $t=1-(1-z)s$, 
we continue the calculation and get
\begin{Eq*}
F(\alpha,\beta,\gamma;z)&\approx(1-z)^{\gamma-\alpha-\beta}\int_{0}^{(1-z)^{-1}/2} s^{\gamma-\alpha-1}(1+zs)^{-\beta}\d s+O(1)\\
&\approx(1-z)^{\gamma-\alpha-\beta}\int_{1}^{(1-z)^{-1}/2} s^{\gamma-\alpha-\beta-1}\d s+O(1).
\end{Eq*}
By a fundamental calculation of this integral with different $\beta$, we finish our proof of \cref{eq:Psi_e}.
\end{proof}

\section{Proof of the blow-up phenomenon}
In this section we use $\Psi_\beta$ defined in the last section to consider the upper bound of the lifespan of solutions to \cref{eq:u} and its dependence of $\varepsilon$ under the condition \cref{eq:re}. 
\subsection{Preliminaries for showing blowup phenomenon}
To begin with, we introduce a cut-off function $\eta(t)\in C_0^\infty((-\infty,1))$ satisfies
\begin{Eq*}
\eta(t)=\begin{cases}
1,&0\leq s\leq\frac{1}{2},\\
\mbox{non-increasing},& \frac{1}{2}<s<1
\end{cases},
\end{Eq*}
and $\eta_T(t):=\eta(t/T)$ with arbitrary $T>0$.

Now, we want to multiply a test function to both sides of equation \cref{eq:u} and use integration by parts. But, instead of multiplying some $\psi$ satisfies $P^*\psi=0$ as many proofs do, we further multiply $\eta_T(t)^{2p'}$ on both sides, then we will get that 
\begin{lemma}\label{le:u_r}
Let $u$ be a solution of \cref{eq:u} in the sense of \De{de:u_d} with maximum lifespan $T_\varepsilon$, then for any $1\leq T<T_\varepsilon$, we have 
\begin{Eq}\label{eq:G_r}
\varepsilon C_{f,g}(\lambda,\beta_1)+\int_0^TG_{\beta_1}(t;T)\d t\lesssim \kl(\int_{{T/2}}^TG_{\beta}(t;T)\d t\kr)^{\frac{1}{p}}S(T;\beta_1,\beta),
\end{Eq}
for any $\beta,\beta_1\in\R$ and $\lambda>1$,where
\begin{Eq*}
C_{f,g}(\lambda,\beta):=&\int_{\R^n} \kl(g(x)+Ar^{-1}f(x)\kr)\Psi_{\beta}(0,x;\lambda)-f(x)\partial_t\Psi_{\beta}(0,x;\lambda)\d x,\\
G_\beta(t;T):=&\int_{\R^n}|u|^p\eta_T^{2p'}\Psi_\beta\d x,\\
S(T;\beta_1,\beta):=&T^{-2}\kl(\int_{T/2}^T\int_{B(0,1+t)} \Psi_{\beta_1}^{p'}\Psi_{\beta}^{-\frac{p'}{p}}\d x\d t\kr)^{\frac{1}{p'}}\\
&+T^{-1}\kl(\int_{T/2}^T\int_{B(0,1+t)}r^{-p'}\Psi_{\beta_1}^{p'}\Psi_{\beta}^{-\frac{p'}{p}}\d x\d t\kr)^{\frac{1}{p'}}\\
&+T^{-1}\kl(\int_{T/2}^T\int_{B(0,1+t)}\kl|\partial_t\Psi_{\beta_1}\kr|^{p'}\Psi_{\beta}^{-\frac{p'}{p}}\d x\d t\kr)^{\frac{1}{p'}}.
\end{Eq*}
\end{lemma}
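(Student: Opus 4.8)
The plan is to test the integral identity from \De{de:u_d} against the function $\Phi(t,x)=\eta_T(t)^{2p'}\Psi_{\beta_1}(t,x;\lambda)$, which is an admissible test function since $\eta_T\in C_0^\infty((-\infty,T))$, $\lambda>1$ guarantees $\Psi_{\beta_1}$ is smooth on the support $B(0,1+t)\subset Q_\lambda$ (because $1+t<\lambda+t$), and the required regularity $\Phi\in\cap_{k=0}^1 C_0^k((-\infty,T);H^{1-k})$ follows from \Le{le:Psi_p}. The key algebraic input is that $\Psi_{\beta_1}$ solves $P^*\Psi_{\beta_1}=0$ in $Q_\lambda$ (by \Cref{le:Q_d} and \Cref{co:F_d}, using $A<n-1+2\rho$ from \cref{eq:re}), so when we move all the spatial derivatives and the first-order time derivative onto $\Phi$ and integrate by parts, the terms in which \emph{all} the $t$-derivatives land on $\Psi_{\beta_1}$ cancel against the potential and Laplacian terms. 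What survives are: the source term $\int_0^T\int |u|^p\eta_T^{2p'}\Psi_{\beta_1}$, the data term $\varepsilon C_{f,g}(\lambda,\beta_1)$ (here we use $\partial_t\Psi_{\beta_1}=-\beta_1\Psi_{\beta_1+1}$ from \cref{eq:Psi_r} to recognize the boundary term at $t=0$), and error terms carrying at least one derivative of $\eta_T$ — i.e.\ factors of $\eta_T'(\eta_T)^{2p'-1}\sim T^{-1}\eta_T^{2p'-1}$ or $\eta_T''(\eta_T)^{2p'-1}+|\eta_T'|^2(\eta_T)^{2p'-2}\sim T^{-2}\eta_T^{2p'-2}$, each multiplied by $\Psi_{\beta_1}$, $r^{-1}\Psi_{\beta_1}$ (from the damping), or $\partial_t\Psi_{\beta_1}$. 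These error terms are supported in $T/2\le t\le T$ because $\eta_T'$ is.

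First I would carry out this integration by parts carefully to arrive at an identity of the schematic form
\begin{Eq*}
\varepsilon C_{f,g}(\lambda,\beta_1)+\int_0^T\!\!\int_{\R^n}|u|^p\eta_T^{2p'}\Psi_{\beta_1}\,\d x\,\d t
\lesssim \int_{T/2}^T\!\!\int_{\R^n}|u|^p\,\omega_T(t,x)\,\d x\,\d t,
\end{Eq*}
where $\omega_T$ collects the three kinds of error weights above, each bounded by a power of $T$ times $\eta_T^{2p'-1}$ (or $\eta_T^{2p'-2}$) times one of $\Psi_{\beta_1}$, $r^{-1}\Psi_{\beta_1}$, $|\partial_t\Psi_{\beta_1}|$. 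Since $2p'-1\ge 2p'/p'=2$ wait — more carefully, since $2p'\cdot\frac1{p}=2p'-2\cdot\frac{p'}{p}\cdot$ — the exponent bookkeeping is that $\eta_T^{2p'-1}$ and $\eta_T^{2p'-2}$ are both $\ge \eta_T^{2p'/p}=\eta_T^{2(p'-1)}$ on $[0,1]$ since $2p'-2=2p'/p$ and $2p'-1\ge 2p'/p$; this is what lets us write each error weight as $\eta_T^{2p'/p}$ times a bounded-in-$\eta_T$ factor, so that a factor $\eta_T^{2p'/p}=(\eta_T^{2p'})^{1/p}$ can be split off to pair with $|u|^p$.

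Next I would apply Hölder's inequality in $(t,x)$ on the right-hand side with exponents $p$ and $p'$, writing the integrand as $\big(|u|^p\eta_T^{2p'}\Psi_\beta\big)^{1/p}\cdot\big(\text{weight}\big)$ where the leftover weight is $\Psi_{\beta_1}\Psi_\beta^{-1/p}$ (or its $r^{-1}$- and $\partial_t$-variants) times the appropriate power of $T$; this produces exactly $\big(\int_{T/2}^T G_\beta(t;T)\,\d t\big)^{1/p}$ times the three terms defining $S(T;\beta_1,\beta)$, after raising the weight integrals to the power $1/p'$. Finally, since $\eta_T\equiv 1$ on $[0,T/2]$ one has $\int_0^T G_{\beta_1}(t;T)\,\d t\ge \int_0^{T/2}\int|u|^p\Psi_{\beta_1}\,\d x\,\d t$, but in fact the cleanest route is to keep $\int_0^T\int|u|^p\eta_T^{2p'}\Psi_{\beta_1}=\int_0^T G_{\beta_1}(t;T)\,\d t$ as is on the left, giving \cref{eq:G_r} directly.

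The main obstacle I anticipate is purely technical rather than conceptual: justifying the integration by parts at the spatial origin $x=0$, where $\Psi_{\beta_1}\sim r^\rho$ and $r^{-2}\Psi_{\beta_1}$, $r^{-1}\partial_t\Psi_{\beta_1}$ are singular. This is exactly why \cref{eq:re} demands $B>-(n-2)^2/4$ (so $\rho>-(n-2)/2$, making $r^\rho\in H^1_{loc}$ and the potential pairing well-defined) and $A<n-1+2\rho=\gamma$ (so $r^{-1}\Psi_{\beta_1}\in L^1_{loc}$ near $0$ and \Cref{co:F_d} applies); one should either work with the weak formulation in \De{de:u_d} throughout — which already encodes the correct integration by parts against $H^1$ test functions — or excise a ball $B(0,\delta)$, integrate by parts on the complement, and check the boundary terms on $\partial B(0,\delta)$ vanish as $\delta\to0^+$ using the $r^\rho$ behavior of $\Psi_{\beta_1}$ and its derivatives from \Le{le:Psi_p}. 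The second minor point is the exponent arithmetic showing the cut-off powers $\eta_T^{2p'-1}$, $\eta_T^{2p'-2}$ dominate $\eta_T^{2p'/p}$, which is immediate from $2p'/p=2p'-2\le 2p'-1$ and $0\le\eta_T\le1$.
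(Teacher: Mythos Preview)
Your proposal is correct and follows essentially the same route as the paper: test against $\eta_T^{2p'}\Psi_{\beta_1}$, use $P^*\Psi_{\beta_1}=0$ so that only the three commutator terms (carrying $\partial_t^2\eta_T^{2p'}$, $Ar^{-1}\partial_t\eta_T^{2p'}$, and $2\partial_t\eta_T^{2p'}\,\partial_t\Psi_{\beta_1}$) survive on $[T/2,T]$, then apply H\"older with the identity $2p'-2=2p'/p$. One slip to fix: in your displayed ``schematic identity'' the right-hand side should be linear in $|u|$, not $|u|^p$---your subsequent H\"older step already presumes this, so the argument is unaffected.
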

\begin{proof}
To begin with, for any given $\beta_1\in\R$, we want to choose $\eta_T^{2p'}\Psi_{\beta_1}$ to be the test function. Noticing that $\eta_T^{2p'}\in C_0^\infty((-\infty,T))$ and $\supp_x u\subset B(0,1+t)$, we only need to show that $\Psi_{\beta_1}$ belongs to $C^k(\R;H^{1-k}(B(0,1+t))$ for $k=0,1$.

By \cref{eq:Psi_e} with $\lambda>1$, we get that
\begin{Eq*}
\Psi_{\beta_1}\lesssim C_t r^\rho,
\end{Eq*}
in $B(0,1+t)$ with some constant $C_t$ depending on $t$. Since $2\rho+n-1>1$ by \cref{eq:rho_d}, its easy to see that $\|\Psi_{\beta_1}\|_{L_x^2(B(0,t+1))}$ is bounded by some constant depending on $t$.
The same estimate also holds for $\|\partial_t \Psi_{\beta_1}\|_{L_x^2(B(0,t+1))}$ because of \cref{eq:Psi_r}. As for the space derivatives, since $\Psi_{\beta_1}$ is spherically symmetric, we only need to consider $\partial_r \Psi_{\beta_1}$. However, the worst singularity will be $r^{\rho-1}$ with $2(\rho-1)+n-1>-1$. So we also have that $\|\partial_r \Psi_{\beta_1}\|_{L_x^2(B(0,t+1))}$ is bounded by some constant depending on $t$.
%
%
%
Hence, we have shown that $\Psi_{\beta_1}$ belongs to the function space for test functions.

Using \De{de:u_d} and making the integration by parts again, we find that 
\begin{Eq*}
& \int_{\R^n} \varepsilon  (g+Ar^{-1}f ) \eta_T^{2p'}(0)\Psi_{\beta_1}(0) \d x
+\int_0^T\int_{\R^n} |u|^p \eta_T^{2p'}\Psi_{\beta_1}\d x\d t \\
=
&\int_0^T\int_{\R^n}\kl(\nabla u\cdot\nabla \kl( \eta_T^{2p'} \Phi_{\beta_1} \kr)
+Br^{-2}u \,  \eta_T^{2p'} \Phi_{\beta_1} \kr) \d x\d t
\\
& -\int_0^T\int_{\R^n} \kl(\partial_t u+Ar^{-1}u\kr)
\partial_t \kl(  \eta_T^{2p'} \Phi_{\beta_1} \kr) \d x\d t
\\
=
& \int_0^T\int_{\R^n} u P^*\kl(\eta_T^{2p'}\Psi_{\beta_1}\kr)\d x\d t
-\kl.\int_{\R^n} u \, \partial_t\kl(\eta_T^{2p'}\Psi_{\beta_1}\kr) \d x\kr|_{t=0}^T.
\end{Eq*}
Noticing that
$\eta_T(T)=\partial_t\eta_T(T)=\partial_t\eta_T(0)=0$ and $\eta_T(0)=1$,
we can simplify the above identity as
\begin{Eq*}
 \varepsilon C_{f,g}(\lambda,\beta_1)+\int_0^TG_{\beta_1}(t;T)\d t 
= \int_0^T\int_{\R^n} u P^*\kl(\eta_T^{2p'}\Psi_{\beta_1}\kr)\d x\d t.
\end{Eq*}


Since $P^*\Psi_{\beta_1}=0$ and $\eta_T$ only depends on $t$, we have
\begin{Eq*}
\int_0^T\int_{\R^n} u P^*\kl(\eta_T^{2p'}\Psi_{\beta_1}\kr)\d x\d t=&\int_0^T\int_{\R^n} u \Psi_{\beta_1}  \partial_t^2\kl(\eta_T^{2p'}\kr)\d x\d t\\
&-\int_0^T\int_{\R^n} u \Psi_{\beta_1}   Ar^{-1}\partial_t \kl(\eta_T^{2p'}\kr) \d x\d t\\
&+\int_0^T\int_{\R^n}  2u\partial_t\Psi_{\beta_1} \partial_t\kl(\eta_T^{2p'}\kr)\d x\d t.
\end{Eq*}
For the first part, noticing that $2p'-2=\frac{2p'}{p}$, $\supp u(t,\cdot)\subset B(0,t+1)$ and for every $k\geq 1$, $\kl.\eta_T^{(k)}(t)\kr|_{0\leq t\leq T/2}=0$ and $\kl|\eta_T^{(k)}(t)\kr|_{0\leq t\leq T}\lesssim C_k$ with some constant $C_k$, we use the chain rule and get that
\begin{Eq*}
&\kl|\int_0^T\int_{\R^n} u \Psi_{\beta_1}  \partial_t^2\kl(\eta_T^{2p'}\kr)\d x\d t\kr|\\
\lesssim& T^{-2}\int_{T/2}^T\int_{B(0,t+1)} \kl|u \Psi_{\beta_1}\eta_T''\eta_T^{2p'-1}\kr|+\kl|u \Psi_{\beta_1} \eta'^2\eta_T^{2p'-2} \kr|\d x\d t\\
\lesssim& T^{-2}\kl(\int_{T/2}^T\int_{B(0,t+1)} |u|^p \Psi_{\beta} \eta_T^{2p'}\d x\d t\kr)^{\frac{1}{p}}\kl(\int_{T/2}^T\int_{B(0,t+1)} \Psi_{\beta_1}^{p'}\Psi_{\beta}^{-\frac{p'}{p}}\d x\d t\kr)^{\frac{1}{p'}}.
\end{Eq*}
Estimates for the rest two parts go similarly, so we omit them and finish the proof.

\end{proof}

To use the inequality above, we need to ask $C_{f,g}(\lambda,\beta)>0$ for some $\lambda$, for this purpose we give the following lemma.
\begin{lemma}\label{le:da_r}
For any given $\beta$, if $f\in H^1(\R^n)$, $g\in L^2(\R^n)$ with compact support and
\begin{Eq*}
\int_{\R^n}r^\rho\kl(g(x)+Ar^{-1}f(x)\kr)\d x>0,
\end{Eq*}
then there exists a $\lambda_\beta>1$ such that for any $\lambda>\lambda_\beta$, $C_{f,g}(\lambda,\beta)>0$.
\end{lemma}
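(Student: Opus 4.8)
The quantity to control is
\[
C_{f,g}(\lambda,\beta)=\int_{\R^n}\kl(g(x)+Ar^{-1}f(x)\kr)\Psi_{\beta}(0,x;\lambda)\,\d x-\int_{\R^n}f(x)\,\partial_t\Psi_{\beta}(0,x;\lambda)\,\d x,
\]
and the goal is to show it is strictly positive once $\lambda$ is large. The first step is to write both integrands in terms of the explicit representation $\Psi_{\beta}(t,x;\lambda)=r^{\rho}(t+r+\lambda)^{-\beta}F(\alpha,\beta,\gamma;z)$ with $z=2r/(t+r+\lambda)$, evaluated at $t=0$. On $B(0,1)$, where $(f,g)$ is supported, we have $r\le 1$, so for $\lambda$ large the argument $z=2r/(r+\lambda)$ is uniformly small (of order $1/\lambda$), hence $F(\alpha,\beta,\gamma;z)=1+O(1/\lambda)$ uniformly on the support. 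Therefore $\Psi_{\beta}(0,x;\lambda)=r^{\rho}(r+\lambda)^{-\beta}\bigl(1+O(\lambda^{-1})\bigr)$. Factoring out $\lambda^{-\beta}$ gives $\lambda^{\beta}\Psi_{\beta}(0,x;\lambda)=r^{\rho}\bigl(1+O(\lambda^{-1})\bigr)\to r^{\rho}$ uniformly on $B(0,1)$ as $\lambda\to\infty$.

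The second step handles the $f\,\partial_t\Psi_{\beta}$ term: by \cref{eq:Psi_r} we have $\partial_t\Psi_{\beta}=-\beta\Psi_{\beta+1}$, so $\lambda^{\beta}\partial_t\Psi_{\beta}(0,x;\lambda)=-\beta\lambda^{\beta}\Psi_{\beta+1}(0,x;\lambda)=-\beta\lambda^{-1}\cdot\lambda^{\beta+1}\Psi_{\beta+1}(0,x;\lambda)$, and by the same estimate $\lambda^{\beta+1}\Psi_{\beta+1}(0,x;\lambda)=r^{\rho}\bigl(1+O(\lambda^{-1})\bigr)$ is uniformly bounded on $B(0,1)$. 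Hence the whole second integral, after multiplication by $\lambda^{\beta}$, is $O(\lambda^{-1})\|f\|_{L^1(B(0,1))}$ and tends to $0$; here I use $f\in H^1(\R^n)$ with compact support to know $f\in L^1(B(0,1))$ (and likewise $g\in L^1$ from $g\in L^2$ on a bounded set). Combining the two steps with dominated convergence (the dominating function is a constant multiple of $r^{\rho}(|g|+Ar^{-1}|f|)\in L^1(B(0,1))$, using $\rho>-(n-2)/2$ so that $r^{\rho-1}$ is integrable near the origin in dimension $n$), we get
\[
\lim_{\lambda\to\infty}\lambda^{\beta}C_{f,g}(\lambda,\beta)=\int_{\R^n}r^{\rho}\kl(g(x)+Ar^{-1}f(x)\kr)\,\d x,
\]
which is strictly positive by hypothesis. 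Consequently there is $\lambda_{\beta}>1$ such that $\lambda^{\beta}C_{f,g}(\lambda,\beta)>0$, hence $C_{f,g}(\lambda,\beta)>0$, for all $\lambda>\lambda_{\beta}$.

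**Main obstacle.** The only real subtlety is the uniform control of $F(\alpha,\beta,\gamma;z)$ and of the error terms near $r=0$, where $r^{-1}f$ appears and where $\Psi_{\beta}$ itself carries the factor $r^{\rho}$ with possibly negative $\rho$. The point to check carefully is that $r^{\rho}$ and $r^{\rho-1}$ are both locally integrable on $\R^n$: this follows from $2\rho+n-1>1$ (equivalently $\rho>-(n-2)/2$), which is exactly what \cref{eq:rho_d} together with $B>-(n-2)^2/4$ guarantees, and this is precisely the condition already used in the proof of \Le{le:u_r}. Once integrability of the dominating function is secured, the uniform smallness of $z$ on the compact support for large $\lambda$ makes the hypergeometric factor and all corrections harmless, and the limit computation goes through by dominated convergence. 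I do not expect any genuine difficulty beyond bookkeeping of these lower-order terms.
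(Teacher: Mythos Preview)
Your argument is correct and follows the same route as the paper's proof: both exploit that on the fixed compact support and for large $\lambda$ one has $\Psi_\beta(0,x;\lambda)\approx r^\rho\lambda^{-\beta}$ and $\partial_t\Psi_\beta(0,x;\lambda)=O(r^\rho\lambda^{-\beta-1})$, together with $r^{\rho-1}\in L^2_{loc}(\R^n)$, so that $\lambda^{\beta}C_{f,g}(\lambda,\beta)\to\int r^\rho(g+Ar^{-1}f)\,\d x>0$. The only difference is cosmetic: the paper quotes these asymptotics from \Le{le:Psi_p}, whereas you re-derive them from the power-series expansion $F(\alpha,\beta,\gamma;z)=1+O(z)$ near $z=0$.
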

\begin{proof}
Based on \Le{le:Psi_p}, when $\lambda>1$, $t=0$ and $r<1$, we have
\begin{Eq*}
\Psi_\beta(0,x;\lambda)\approx r^\rho\lambda^{-\beta},\qquad
\partial_t \Psi_\beta(0,x;\lambda)\lesssim r^\rho\lambda^{-\beta-1}.
\end{Eq*}
Noticing that $r^{\rho-1}\in L_{loc}^2(\R^n)$, so all integrates appeared in $C_{f,g}(\lambda,\beta)$ is finite. 
Then, this lemma is just an obvious conclusion.
\end{proof}

Next, we want to use \Le{le:u_r} to estimate $\int G_\beta(t;T)\d t$ for some $\beta$.
So we need to estimate the $S(T;\beta_1,\beta)$.

Here we mention that when we want to estimate the relation between $\int_{T/2}^T G_\beta(t;T)\d t$ and the initial data, we should let $S(T;\beta_1,\beta)$ be small. 
And when we want to estimate the relation between $\int_{T/2}^T G_\beta(t;T)\d t$ and$\int_{0}^T G_\beta(t;T)\d t$, we should choose $\beta_1=\beta$. Now we give the following lemma.
\begin{lemma}\label{le:S_e}
Assume $T\geq 1$, $\lambda>1$, $p>\frac{n+\rho}{n+\rho-1}$ and $\beta< \gamma-\alpha$.
When $\beta_1=0$,  
we have that 
\begin{Eq*}
S(T;0,\beta)\lesssim T^{\frac{\rho+n+1}{p'}-2+\frac{\beta}{p}}.
\end{Eq*}
When $\beta_1>\gamma-\alpha-\frac{1}{p}$,  
we have that 
\begin{Eq*}
S(T;\beta_1,\beta)\lesssim T^{\frac{\rho+n}{p'}-\frac{n-A+2\rho+1}{2}+\frac{\beta}{p}}.
\end{Eq*}
When $\beta_1=\beta\neq 0$, 
we have that 
\begin{Eq*}
S(T;\beta,\beta)\lesssim& T^{\frac{\rho+n}{p'}-\beta-2+\frac{\beta}{p}}S_0(T)^{\frac{1}{p'}},\\
S_0(T):= &
\begin{cases}
T,&\beta<\gamma-\alpha-\frac{1}{p},\\
T\ln(T+1),&\beta=\gamma-\alpha-\frac{1}{p},
\end{cases}
\end{Eq*} 
\end{lemma}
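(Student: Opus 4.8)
\medskip

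The plan is to substitute the pointwise bounds of \Le{le:Psi_p} --- the estimate \eqref{eq:Psi_e} for $\Psi_{\beta_1}$ and $\Psi_\beta$, together with $\partial_t\Psi_{\beta_1}=-\beta_1\Psi_{\beta_1+1}$ from \eqref{eq:Psi_r} --- directly into the three integrals making up $S(T;\beta_1,\beta)$, and then to reduce each of them to two elementary one-variable integrations: an integration in $r$ over $(0,1+t)$ followed by an integration in $t$ over $(T/2,T)$.

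First I would record the features shared by all three cases. Since $\beta<\gamma-\alpha$, \eqref{eq:Psi_e} gives $\Psi_\beta\approx r^\rho(\lambda+t)^{-\beta}$, hence $\Psi_\beta^{-p'/p}\approx r^{-\rho p'/p}(\lambda+t)^{\beta p'/p}$; using $1-\tfrac1p=\tfrac1{p'}$ and $\tfrac{p'}{p}=p'-1$ the radial exponents collapse to $\rho p'-\rho\tfrac{p'}{p}=\rho$ (and $\rho-p'$ for the middle term), while $\beta\tfrac{p'}{p}=\beta(p'-1)$. On $[T/2,T]\times B(0,1+t)$ one has $1+t\approx\lambda+t\approx T$ with implicit constant depending only on the fixed $\lambda$, so the only delicate ingredient is the light-cone factor $\bigl(1-\tfrac{r}{\lambda+t}\bigr)$ that $\Psi_{\beta_1}$, or $\Psi_{\beta_1+1}$, carries when the relevant index exceeds $\gamma-\alpha$: because the spatial integral runs only up to $r=1+t<\lambda+t$, this factor is pinched between $\tfrac{\lambda-1}{\lambda+t}\approx T^{-1}$ and $1$. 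I would therefore bound it by $1$ when its exponent is $\ge 0$ and by $T^{-(\mathrm{exponent})}$ when it is negative, noting that at the critical exponent the radial integral $\int_0^{1+t}(1-\tfrac{r}{\lambda+t})^{-1}r^{\rho+n-1}\,\d r$ acquires a factor $\ln T$, whereas the logarithmic line of \eqref{eq:Psi_e} is harmless since $\int_0^1(1-\ln u)^{p'}(1-u)^{\rho+n-1}\,\d u<\infty$. In every term the radial integral is either $\int_0^{1+t}r^{\rho+n-1}\,\d r\approx T^{\rho+n}$ or, for the middle term, $\int_0^{1+t}r^{\rho+n-1-p'}\,\d r$, and convergence of the latter at $r=0$ is exactly the hypothesis $p>\tfrac{n+\rho}{n+\rho-1}$ (equivalently $p'<n+\rho$); this is the sole place that assumption enters.

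With these preparations the three cases are mechanical. For $\beta_1=0$ one has $\Psi_0\approx r^\rho$ (since $0<\gamma-\alpha$ by \eqref{eq:re}) and $\partial_t\Psi_0=0$, so the time-derivative term vanishes and the first two both return $T^{\frac{\rho+n+1}{p'}-2+\frac{\beta}{p}}$. For $\beta_1>\gamma-\alpha-\tfrac1p$ the decisive term is the third: then $\beta_1+1>\gamma-\alpha$, so $\partial_t\Psi_{\beta_1}=-\beta_1\Psi_{\beta_1+1}$ carries a light-cone factor of exponent $(\gamma-\alpha-\beta_1-1)p'<-1$, which is absorbed at the cost of $T^{(\beta_1+1-(\gamma-\alpha))p'}$; after the $r$- and $t$-integrations the $\beta_1$-dependence cancels, leaving $T^{\frac{\rho+n+1}{p'}-1-(\gamma-\alpha)+\frac{\beta}{p}}$, and since $\tfrac1{p'}-1<0$ and $\gamma-\alpha=\tfrac{n-1-A+2\rho}{2}$ this is at most $T^{\frac{\rho+n}{p'}-\frac{n-A+2\rho+1}{2}+\frac{\beta}{p}}$. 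The first two terms require splitting according to whether $\beta_1$ is $<$, $=$ or $>\gamma-\alpha$, but they are estimated the same way and never exceed this bound --- this is precisely where $\beta_1>\gamma-\alpha-\tfrac1p$ is used. Finally, for $\beta_1=\beta\neq0$ the algebraic collapse $\Psi_\beta^{p'}\Psi_\beta^{-p'/p}=\Psi_\beta$ turns the first two terms at once into $T^{\frac{\rho+n-\beta+1}{p'}-2}=T^{\frac{\rho+n}{p'}-\beta-2+\frac{\beta}{p}}\,T^{1/p'}$, while the third term, through $\partial_t\Psi_\beta=-\beta\Psi_{\beta+1}$, carries $(1-\tfrac{r}{\lambda+t})^{\gamma-\alpha-\beta-1}$; the $p'$-th power of this has exponent $>-1$ when $\beta<\gamma-\alpha-\tfrac1p$, yielding $S_0(T)=T$, and exponent exactly $-1$ when $\beta=\gamma-\alpha-\tfrac1p$, yielding the extra $\ln(T+1)$, i.e.\ $S_0(T)=T\ln(T+1)$.

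The only genuine obstacle is the bookkeeping: tracking where $\beta_1$, $\beta_1+1$ and $\beta+1$ lie relative to $\gamma-\alpha$ --- which selects the line of \eqref{eq:Psi_e} in force and tells whether the light-cone power is positive, negative, or the critical value $-1$ --- and consistently exploiting that this factor stays bounded away from $0$ on $B(0,1+t)$. I would streamline the write-up by first establishing, once and for all, the two auxiliary facts $\int_0^{1+t}(1-\tfrac{r}{\lambda+t})^{\sigma}r^{\rho+n-1}\,\d r\lesssim T^{\rho+n}$ for $\sigma>-1$, $\lesssim T^{\rho+n}\ln T$ for $\sigma=-1$ and $\lesssim T^{\rho+n-\sigma}$ for $\sigma<-1$, and $\int_{T/2}^T(\lambda+t)^\theta\,\d t\approx T^{\theta+1}$, and then applying them term by term.
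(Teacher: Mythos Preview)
Your overall strategy is exactly the paper's, and the cases $\beta_1=0$ and $\beta_1=\beta$ go through as you outline. The case $\beta_1>\gamma-\alpha-\tfrac1p$, however, does not: the crucial third term of $S$ comes out a factor $T^{1/p'}$ too large, and the sentence ``since $\tfrac1{p'}-1<0$ \ldots\ this is at most \ldots'' does not repair it.

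The slip is in how you treat the light-cone factor when its exponent $\sigma=(\gamma-\alpha-\beta_1-1)p'$ satisfies $\sigma<-1$. You propose to bound $(1-\tfrac{r}{\lambda+t})^{\sigma}$ \emph{pointwise} by $T^{-\sigma}$ and then integrate $r^{\rho+n-1}$ over $(0,1+t)$, obtaining $T^{\rho+n-\sigma}$; this is also what the last line of your auxiliary fact asserts. But the sharp value is $T^{\rho+n-1-\sigma}$: for $\sigma<-1$ the integrand is concentrated in a shell of width $O(1)$ near $r=1+t$, so only $r^{\rho+n-1}\approx T^{\rho+n-1}$ is felt, not the full radial volume $T^{\rho+n}$. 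Concretely, with $u=1-\tfrac{r}{\lambda+t}$,
\[
\int_0^{1+t}\Bigl(1-\tfrac{r}{\lambda+t}\Bigr)^{\sigma}r^{\rho+n-1}\,dr
=(\lambda+t)^{\rho+n}\int_{\frac{\lambda-1}{\lambda+t}}^{1}u^{\sigma}(1-u)^{\rho+n-1}\,du
\approx T^{\rho+n}\cdot T^{-\sigma-1},
\]
since the $u$-integral is governed by its lower limit $\approx T^{-1}$. Equivalently --- and this is what the paper does --- bound $r^{\rho+n-1}\le(1+t)^{\rho+n-1}$ first and then integrate the light-cone factor alone: $\int_0^{1+t}(1-\tfrac{r}{\lambda+t})^{\sigma}\,dr\lesssim(1+t)^{-\sigma}$. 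Carrying this extra $T^{-1}$ through the $1/p'$-root yields exactly the stated exponent $\tfrac{\rho+n}{p'}-\tfrac{n-A+2\rho+1}{2}+\tfrac{\beta}{p}$. Your exponent $\tfrac{\rho+n+1}{p'}-1-(\gamma-\alpha)+\tfrac{\beta}{p}$ exceeds this by $\tfrac1{p'}>0$ (note that $\tfrac{n-A+2\rho+1}{2}=(\gamma-\alpha)+1$, not $\gamma-\alpha$), so for $T\ge 1$ the inequality you claim goes the wrong way. The remedy is simply to correct the $\sigma<-1$ line of your auxiliary estimate to $T^{\rho+n-1-\sigma}$.
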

\begin{proof}
We begin with the $\beta_1=0$ situation, for the first term in $S(T;0,\beta)$, when $\beta< \gamma-\alpha$ we calculate
\begin{Eq*} 
\int_{B(0,t+1)}\Psi_{0}^{p'}\Psi_{\beta}^{-\frac{p'}{p}}\d x\lesssim &\int_{B(0,t+1)}r^{\rho}(t+\lambda)^{(-\beta)\kl(-\frac{p'}{p}\kr)}\d x\\
\lesssim &(t+1)^{\beta\frac{p'}{p}}\int_0^{t+1}r^{\rho+n-1}\d r\\
\approx& (t+1)^{\rho+n+\beta\frac{p'}{p}}.
\end{Eq*}
Then we have that 
\begin{Eq*}
T^{-2}\kl(\int_{T/2}^T\int_{B(0,t+1)}\Psi_{0}^{p'}\Psi_{\beta}^{-\frac{p'}{p}}\d x\d t\kr)^{\frac{1}{p'}}
\lesssim &T^{-2}\kl(\int_{T/2}^T(t+1)^{\rho+n+\beta\frac{p'}{p}}\d t\kr)^{\frac{1}{p'}}\\
\lesssim &T^{\frac{\rho+n+1}{p'}-2+\frac{\beta}{p}}.
\end{Eq*}
Although the estimate for the second term goes similarly, we mention that we need to require $p>\frac{n+\rho}{n+\rho-1}$ such that there is no singularity  in the $r$-integral. As for the third term, it vanishes since $\partial_t\Psi_0=0$ by \cref{eq:Psi_r}. Now we finish the proof for this situation $\beta_1=0$.

When we consider $\beta_1>\gamma-\alpha-\frac{1}{p}$, the estimate for the first two terms is almost the same as before.
As for the third term, we use \Le{le:Psi_p} and calculate
\begin{Eq*} 
&\int_{B(0,t+1)}\kl|\partial_t\Psi_{\beta_1}\kr|^{p'}\Psi_{\beta}^{-\frac{p'}{p}}\d x\\
\lesssim &\int_{B(0,t+1)}r^{\rho}(t+\lambda)^{(-\beta_1-1)p'-(-\beta)\frac{p'}{p}}\kl(1-\frac{r}{\lambda+t}\kr)^{(\gamma-\alpha-\beta_1-1)p'}\d x\\
\lesssim &(t+1)^{\rho+n-1+(-\beta_1-1)p'+\beta\frac{p'}{p}}\int_0^{t+1}\kl(1-\frac{r}{\lambda+t}\kr)^{(\gamma-\alpha-\beta_1-1)p'}\d r.
\end{Eq*}
Based on the assumption on $\beta_1$, we have
\begin{Eq*}
\int_0^{t+1}\kl(1-\frac{r}{\lambda+t}\kr)^{(\gamma-\alpha-\beta_1-1)p'}\d r\lesssim(t+1)^{-(\gamma-\alpha-\beta_1-1)p'}.
\end{Eq*}
Here we mention that $\alpha=\frac{n+A-1+2\rho}{2}$, $\gamma=n-1+2\rho$, then we get the result by a simple calculation. 

Next, we consider $\beta_1=\beta\neq 0$, similarly to the above, we only consider the third term.
Here when $\beta<\gamma-\alpha-1$, we have
\begin{Eq*}
\int_0^{t+1}1\d r\lesssim(t+1)\equiv S_0(t+1).
\end{Eq*}
When $\beta=\gamma-\alpha-1$,  we have
\begin{Eq*}
\int_0^{t+1}\kl(1-\ln\kl(1-\frac{r}{t+\lambda}\kr)\kr)^{p'}\d r\lesssim(t+1)\equiv S_0(t+1).
\end{Eq*}
When $\gamma-\alpha-1<\beta<\gamma-\alpha-\frac{1}{p}$,  we have
\begin{Eq*}
\int_0^{t+1}\kl(1-\frac{r}{\lambda+t}\kr)^{(\gamma-\alpha-\beta-1)p'}\d r\lesssim(t+1)\equiv S_0(t+1).
\end{Eq*}
When $\beta=\gamma-\alpha-\frac{1}{p}$,  we have
\begin{Eq*}
\int_0^{t+1}\kl(1-\frac{r}{\lambda+t}\kr)^{(\gamma-\alpha-\beta-1)p'}\d r\lesssim(t+1)\ln(t+2)\equiv S_0(t+1).
\end{Eq*}
Then we get the results same as before.
\end{proof}

We mention that once we get the estimate of $\int_{{T/2}}^TG_{\beta}(t;T)\d t$, 
we get the estimate of $\int_{0}^TG_{\beta}(t;T)\d t$. 
One version is that the former one is smaller than the latter one. 
Also, a direct application of binary decomposition can be used here because $\eta_T$ increases with $T$ then $G_\beta(t;T)$ increases with $T$.
However, in this paper, we choose to use ODI(ordinary differential inequality) to expose this relation. 
For this purpose, we introduce an auxiliary function.

\begin{lemma}\label{le:H_d}
For any given $\beta\in\R$, if we define 
\begin{Eq*}
H_\beta(t):=\int_0^T\int_{t/2}^tG_\beta(s;t)\d s\frac{\d t}{t},
\end{Eq*}
then
\begin{Eq*}
TH_\beta'(T)=\int_{T/2}^TG_\beta(t;T)\d t,\qquad H_\beta(t)\leq \int_{0}^TG_\beta(t;T)\d t.
\end{Eq*}
\end{lemma}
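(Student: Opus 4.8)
The plan is to read the definition as $H_\beta(T)=\int_0^T\kl(\int_{\tau/2}^{\tau}G_\beta(s;\tau)\d s\kr)\tfrac{\d\tau}{\tau}$, i.e.\ as a function of the upper endpoint $T\in(0,T_\varepsilon)$ (the displayed definition harmlessly reuses $t$ for both the argument and the outer integration variable), and to prove the two assertions separately. For the first identity I would differentiate the single integral: writing $\phi(\tau):=\tau^{-1}\int_{\tau/2}^{\tau}G_\beta(s;\tau)\d s$, the regularity of $u$ imposed in \De{de:u_d} together with the pointwise bounds on $\Psi_\beta$ and $\partial_t\Psi_\beta$ from \Le{le:Psi_p} makes $\phi$ locally integrable (indeed continuous) on $(0,T_\varepsilon)$, so $H_\beta$ is absolutely continuous with $H_\beta'(T)=\phi(T)$ for a.e.\ $T$; multiplying by $T$ gives $TH_\beta'(T)=\int_{T/2}^{T}G_\beta(t;T)\d t$. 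Absolute continuity is all that the later ODI argument needs.

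For the inequality $H_\beta(T)\le\int_0^TG_\beta(t;T)\d t$ the two ingredients are monotonicity in the cut-off parameter and Fubini's theorem. Since $\eta_T(t)=\eta(t/T)$ with $\eta$ non-increasing, the map $T\mapsto\eta_T(t)$ is non-decreasing for each fixed $t$; as $|u|^p\ge0$ and $\Psi_\beta=r^\rho\Phi_\beta\ge0$ on $\supp u\subset B(0,1+t)\subset Q_\lambda$ (the hypergeometric factor $\Phi_\beta=F(\alpha,\beta,\gamma;\cdot)$ is positive on $(0,1)$ by \Co{co:F_d} and $r^\rho\ge0$), it follows that $\tau\mapsto G_\beta(s;\tau)$ is non-decreasing. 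Hence $G_\beta(s;\tau)\le G_\beta(s;T)$ whenever $\tau\le T$, and
\begin{Eq*}
H_\beta(T)\le\int_0^T\kl(\int_{\tau/2}^{\tau}G_\beta(s;T)\d s\kr)\frac{\d\tau}{\tau}.
\end{Eq*}
I would then swap the order of integration over $\{(\tau,s):0<\tau<T,\ \tau/2<s<\tau\}=\{(\tau,s):0<s<T,\ s<\tau<\min(2s,T)\}$ to get
\begin{Eq*}
\int_0^T G_\beta(s;T)\kl(\int_s^{\min(2s,T)}\frac{\d\tau}{\tau}\kr)\d s\le(\ln 2)\int_0^TG_\beta(s;T)\d s\le\int_0^TG_\beta(s;T)\d s,
\end{Eq*}
since $\int_s^{\min(2s,T)}\d\tau/\tau\le\int_s^{2s}\d\tau/\tau=\ln2<1$. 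This finishes the proof.

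I do not anticipate a genuine obstacle: the content is the fundamental theorem of calculus plus a Fubini interchange, the decisive structural input being that enlarging $T$ only increases $\eta_T$, hence $G_\beta(\cdot;T)$. The only point deserving a line of care is the legitimacy of differentiating $H_\beta$, which is why one wants to invoke the regularity of \De{de:u_d} and the bounds of \Le{le:Psi_p} to guarantee that $\phi$ is (locally) integrable and $H_\beta$ absolutely continuous.
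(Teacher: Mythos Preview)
Your proposal is correct and follows essentially the same route as the paper: the derivative identity is taken as immediate from the fundamental theorem of calculus, and the inequality comes from Fubini on the region $\{\tau/2<s<\tau<T\}$, the monotonicity $G_\beta(s;\tau)\le G_\beta(s;T)$, and $\int_s^{2s}\d\tau/\tau=\ln 2<1$. The only cosmetic difference is that the paper swaps the order of integration first and then invokes monotonicity, whereas you do it in the reverse order; your added justification of the positivity of $\Psi_\beta$ and the absolute continuity of $H_\beta$ is more explicit than what the paper writes but not different in substance.
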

\begin{proof}
The first equality is obvious, so we only focus on the inequality. Here we exchange the order of integration, then
\begin{Eq*}
H_\beta(T)=&\int_0^T\int_{s}^{\min\{2s,T\}}\frac{1}{t}G_\beta(s;t)\d t\d s\\
&\leq \int_0^T\int_{s}^{2s}\frac{1}{t}G_\beta(s;T)\d t\d s\leq\int_0^TG_\beta(s;T)\d s.
\end{Eq*}
This finishes the proof.
\end{proof}

\subsection{Blow-up phenomenon}
Based on \cref{eq:G_r} and \Le{le:S_e}, we find that the different choices of $\beta_1$ lead to different kind of lower bounds of $\int_{T/2}^T G_\beta(t;T)\d t$. To start with, we fix $\beta<\gamma-\alpha-\frac{1}{p}$.

Firstly we choose $\beta_1>\gamma-\alpha-\frac{1}{p}$ and show the blow-up result corresponding to the \emph{Strauss} exponent.
Using \Le{le:u_r}, \ref{le:da_r} and \ref{le:S_e}, under their condition we have
\begin{Eq*}
\varepsilon \lesssim& T^{\frac{\rho+n}{p'}-\frac{n-A+2\rho+1}{2}+\frac{\beta}{p}} \kl(\int_{T/2}^T G_\beta(t;T)\d t\kr)^\frac{1}{p},\\
\int_{0}^T G_\beta(t;T)\d t\lesssim& T^{\frac{\rho+n+1}{p'}-\beta-2+\frac{\beta}{p}}  \kl(\int_{T/2}^T G_\beta(t;T)\d t\kr)^\frac{1}{p},
\end{Eq*}
respectively. Mixing them and we get that
\begin{Eq}\label{eq:GT_r_S}
\varepsilon^p T^{n+\rho-\frac{n+A-1}{2}p-\beta}
\lesssim\int_{T/2}^T G_\beta(t;T)\d t
\lesssim T^{n+\rho-1-\frac{2}{p-1}-\beta},
\end{Eq}
which means
\begin{Eq*}
T\lesssim \varepsilon^{\frac{2p(p-1)}{h_S(n+A,p)}}
\end{Eq*}
for any $p<p_S(n+A)$. Because this relation must be satisfied for any $1\leq T<T_\varepsilon$, we finish the proof of \cref{eq:Main_Ss}.

Then, we choose $\beta_1=0$ and show the blow-up result corresponding to the \emph{Fujita} exponent.
Using these lemma again, under their condition we have
\begin{Eq*}
\varepsilon \lesssim& T^{\frac{\rho+n+1}{p'}-2+\frac{\beta}{p}} \kl(\int_{T/2}^T G_\beta(t;T)\d t\kr)^\frac{1}{p},\\
\int_{0}^T G_\beta(t;T)\d t\lesssim& T^{\frac{\rho+n+1}{p'}-\beta-2+\frac{\beta}{p}} \kl(\int_{T/2}^T G_\beta(t;T)\d t\kr)^\frac{1}{p}
\end{Eq*}
respectively. Mixing them and we get that
\begin{Eq*}
\varepsilon^p T^{(n+\rho-1)(1-p)+2-\beta}
\lesssim\int_{T/2}^T G_\beta(t;T)\d t
\lesssim T^{n+\rho-1-\frac{2}{p-1}-\beta},
\end{Eq*} 
which means
\begin{Eq*}
T\lesssim \varepsilon^{\frac{p-1}{h_F(n-1+\rho,p)}}
\end{Eq*}
for any $p<p_F(n-1+\rho)$. Similarly to the above we finish the proof of \cref{eq:Main_Fs}.

Next, we are going to consider the `critical' situation.
As before, we begin with the estimate related to \emph{Strauss} exponent.
When $p=p_S(n+A)$, we have 
\begin{Eq*}
n+\rho-\frac{n+A-1}{2}p=n+\rho-1-\frac{2}{p-1}=\gamma-\alpha-\frac{1}{p}.
\end{Eq*}
Then from \cref{eq:GT_r_S} we can only get that
\begin{Eq}
\varepsilon^p T^{\gamma-\alpha-\frac{1}{p}-\beta}
\lesssim\int_{T/2}^T G_\beta(t;T)\d t
\lesssim T^{\gamma-\alpha-\frac{1}{p}-\beta}.
\end{Eq}

This suggests us to choose $\beta=\gamma-\alpha-\frac1p$ and treat the integral of $G_\beta(t,T)$  more precisely.
Using \Le{le:S_e} again and under above condition we get
\begin{Eq*}
\varepsilon \lesssim& \kl(\int_{T/2}^T G_\beta(t;T)\d t\kr)^\frac{1}{p},\\
\int_{0}^T G_\beta(t;T)\d t\lesssim& \kl(\int_{T/2}^T G_\beta(t;T)\d t\kr)^\frac{1}{p}\ln(T+1)^{\frac{1}{p'}}.
\end{Eq*} 
Here we use $H_\beta$ defined in \Le{le:H_d} and we get that for any $T\geq 2$
\begin{Eq*}
\varepsilon^p \lesssim TH_\beta'(T),\qquad 
H_\beta(T)^p\ln(T)^{1-p}\lesssim TH_\beta'(T).
\end{Eq*}
We introduce $s=\ln(T)$ and $I(s)=H(T)$ where $H'(T)=T^{-1}I'(s)$, then above ODI system equivalents to
\begin{Eq*}
\varepsilon^p \lesssim I'(s),\qquad 
I(s)^p s^{1-p}\lesssim I'(s).
\end{Eq*}
This is a classical ODI system which must blow-up in finite time, using the scaling method we know that $I(s)$ must blow up before $C\varepsilon^{-p(p-1)}$ with some constant $C$, by the relation between $s$ and $T$ we finish the proof of \cref{eq:Main_Sc}.

For the $p=p_F(n-1+\rho)$ situation, similarly we have
\begin{Eq*}
(n+\rho-1)(1-p)+2=n+\rho-1-\frac{2}{p-1}=0.
\end{Eq*}
Since $\gamma-\alpha>0$, we choose $\beta=0$ and get
\begin{Eq*}
\varepsilon \lesssim& \kl(\int_{T/2}^T G_\beta(t;T)\d t\kr)^\frac{1}{p},\\
\int_{0}^T G_\beta(t;T)\d t\lesssim& \kl(\int_{T/2}^T G_\beta(t;T)\d t\kr)^\frac{1}{p}.
\end{Eq*}
Similar as before,
with $s=\ln(T)$ and $I(s)=H(T)$ we get
\begin{Eq*}
\varepsilon^p \lesssim I'(s),\qquad I(s)^p \lesssim I'(s).
\end{Eq*}
Using scaling method again we know that $I(s)$ must blow up before $C\varepsilon^{-(p-1)}$ with some constant $C$, by the relation between $s$ and $T$ we finish the proof for \cref{eq:Main_Fc}.

\section{Some improvements 
on the initial data}
In this section, we want to show a bit more results
by means of the idea from \cite{MR3953034} and comparison principle.
More precisely, we derive the blow-up results in a slightly different solution space from \De{de:u_d}
given in \De{de:u_d2} below, without assuming that the initial data is compactly supported.

For convenience, we assume 
\begin{Eq*}
f(x)\equiv0,\qquad 0\leq g(x)\not \equiv 0
\end{Eq*}
in this section. Moreover, we assume $r^{\frac{n+A-1}{2}}g(x)\in L_{loc}^\infty(\R^n)$, then we give the definition of solutions.
\begin{defi}\label{de:u_d2}
Assume $(f,g)$ as above. We call $u$ is a mild solution of \cref{eq:u} in $[0,T]\times \R^n$ if $r^{\frac{n+A-1}{2}}u\subset C\kl([0,T];L_{loc}^\infty(\R^n)\kr)$ satisfies
\begin{Eq}\label{eq:u_E}
r^{\frac{n+A-1}{2}}\int_{S^{n-1}}u(t,r\nu)\d\nu=&\int_0^t\int_{|r-t+s|}^{r+t-s}\int_{S^{n-1}}\kl(\frac{\tilde B-B}{\rho^2}u(t,\rho\nu)+|u|^p(t,\rho\nu)\kr)\\
&\phantom{\int_0^t\int_{|r-t+s|}^{r+t-s}\int_{S^{n-1}}}\times K(t-s,r,\rho)\rho^{\frac{n+A-1}{2}}\d\nu\d\rho\d s\\
&+\varepsilon\int_{|r-t|}^{r+t}\int_{S^{n-1}}g(\rho\nu)K(t,r,\rho)\rho^{\frac{n+A-1}{2}}\d\nu\d\rho
\end{Eq}
where $\tilde B=\frac{A^2+2A-(n-1)(n-3)}{4}$ and $K(t,r,\rho)=2^{-1-A}\rho^{-A}(\rho+r-t)^A$ is a bounded function in its integral region.
\end{defi}

We mention that when $g$ and $u$ is 
spherically symmetric, the equation \cref{eq:u} is equivalent to
\begin{Eq}\label{eq:V}
\begin{cases}
\kl(\partial_t-\partial_r+\frac{A}{2r}\kr)\kl(\partial_t+\partial_r+\frac{A}{2r}\kr)V=F(t,r),\\
V(0,r)=\partial_r V(t,0)=0,\quad V_t(0,r)=\varepsilon \tilde g(r),
\end{cases}
\end{Eq}
where
\begin{Eq*}
V=r^{\frac{n-1}{2}}u,\quad F=r^{\frac{n-1}{2}}\kl(\frac{\tilde B-B}{r^2}u+|u|^p\kr),\quad  \tilde g(r)=r^\frac{n-1}{2}g(r).
\end{Eq*}
Then, it is easy to testify that $\cref{eq:u_E}$ gives an expression of solution in the linear case, which leaves to readers. It also means our definition of solution is reasonable. Now, we are going to give the main result in this section.

\begin{thm}\label{th:U_l}
 Assume $A\geq 0$ and
 $-\frac{(n-2)^2}4 \le B\leq \frac{A^2+2A-(n-1)(n-3)}{4}$.
Let $f$ and $g$ be as above. If
$u$ is a solution in $[0,T]\times \R^n$ in the sense of \De{de:u_d2}, we must have
\begin{Eq}\label{eq:T_l}
T\leq& 
C\varepsilon^{\frac{2p(p-1)}{h_S(n+A,p)}},&\quad&
p<p_S(n+A),\\
T\leq& \exp\kl(C\varepsilon^{-p(p-1)}\kr),&\quad& 
p=p_S(n+A)
\end{Eq}
for some constant $C$ which does not depend on $\varepsilon$.
\end{thm}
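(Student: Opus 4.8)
The plan is to exploit the representation formula \cref{eq:u_E} together with a comparison/iteration argument in the spirit of John's classical work and of \cite{MR3953034}. First I would record the crucial sign information: since $B \le \tilde B = \frac{A^2+2A-(n-1)(n-3)}{4}$, the coefficient $\frac{\tilde B - B}{\rho^2} \ge 0$, and since $K(t,r,\rho) = 2^{-1-A}\rho^{-A}(\rho+r-t)^A$ is nonnegative (and bounded) on the integration region, the right-hand side of \cref{eq:u_E} is a sum of \emph{nonnegative} contributions. Hence the spherical average $U(t,r) := r^{\frac{n+A-1}{2}}\int_{S^{n-1}} u(t,r\nu)\,\d\nu$ is bounded below by dropping the $\frac{\tilde B - B}{\rho^2}u$ term (which is itself nonnegative once we know $u \ge 0$) and keeping only the $|u|^p$ term and the data term. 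A short bootstrap shows $U \ge 0$ for all $t$: the data term $\varepsilon \int g K \rho^{\cdots} \ge 0$ gives positivity for small $t$, and then the nonlinear term propagates it. This reduces the problem to the same functional inequality that drives the Strauss blow-up for the free wave equation in dimension $n+A$.

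Next I would set up the iteration. From \cref{eq:u_E}, after discarding the potential term, one gets a lower bound of the form
\begin{Eq*}
U(t,r) \gtrsim \varepsilon\, L_0(t,r) + \int_0^t \int_{|r-t+s|}^{r+t-s} \frac{U(s,\rho)^p}{\rho^{(\frac{n+A-1}{2})(p-1)}}\, K(t-s,r,\rho)\, \rho^{\frac{n+A-1}{2}}\,\d\rho\,\d s,
\end{Eq*}
where $L_0$ is the linear evolution of $g$, which for data supported near the origin behaves like $t^{-\frac{n+A-1}{2}}$ on the light cone $t - r \approx 1$; this is exactly where the exponent $n+A$ enters in place of $n$. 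I would then run the standard John-type iteration: plug a lower bound $U(s,\rho) \ge c_k (s-r)^{a_k}(\text{something})$ back into the integral, track the recursion on the exponents $a_k$ and the constants $c_k$, and show that when $h_S(n+A,p) > 0$ (i.e.\ $p < p_S(n+A)$) the constants $c_k$ grow doubly exponentially while the time variable only needs to exceed a threshold $\sim \varepsilon^{-\frac{2p(p-1)}{h_S(n+A,p)}}$ for the iteration to force $U = \infty$. The critical case $p = p_S(n+A)$ is handled by the usual refinement (carrying an extra logarithmic factor through the iteration, or equivalently by a slicing argument), yielding the $\exp(C\varepsilon^{-p(p-1)})$ bound. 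In both cases, since $A \ge 0$, the effective dimension $n+A$ is at least $n$, so all the integrability conditions needed to make the $\rho$-integrals converge are automatically met (this is where the hypothesis $B \ge -(n-2)^2/4$, hence $\rho$ real, is used only to make the geometry in \cref{eq:V} meaningful; the sharper condition here is the upper bound on $B$).

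The main obstacle I anticipate is \textbf{controlling the kernel $K$ and the weight $\rho^{\frac{n+A-1}{2}}$ inside the iteration}: unlike the free wave kernel in odd dimensions, $K(t,r,\rho) = 2^{-1-A}\rho^{-A}(\rho+r-t)^A$ carries the non-integer power $A$, and the radial weight $\rho^{-(\frac{n+A-1}{2})(p-1)}$ coming from converting $u$ to $U$ could create a singularity at $\rho = 0$ when $p$ is close to $1$. One must check that on the relevant part of the backward light cone $\rho \gtrsim s \gtrsim 1$, so that $\rho^{-(\ldots)}$ is bounded and $K$ is comparable to a constant, which is where the finite-speed-of-propagation structure ($\supp$ near the origin) is essential; away from the tip, a separate (easier) estimate on a small ball near $\rho = 0$ suffices. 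Once the kernel is pinned between two positive constants on the region that matters, the recursion on $(a_k, c_k)$ is the familiar one and the stated lifespan bounds follow; the only remaining care is to verify that the blow-up of $U$ at a finite time is incompatible with $u$ being a mild solution on $[0,T]\times\R^n$ for $T$ larger than the asserted threshold, which is immediate from the definition in \De{de:u_d2}.
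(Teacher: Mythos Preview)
Your outline is in the right spirit---positivity of $K$ and of $\tilde B-B$, reduction to a scalar radial quantity, then an iteration driven by the $|u|^p$ term---but it diverges from the paper's argument in two ways worth noting. First, the paper does \emph{not} run a full John-type recursion on pairs $(a_k,c_k)$. Instead it performs only two explicit substitutions: the data term gives $u\gtrsim \varepsilon r^{-\frac{n+A-1}{2}}$ on the strip $0<r-t<1/2$, and one pass through the Duhamel integral upgrades this to $u\gtrsim \varepsilon^p r^{-\frac{n+A-1}{2}}(t-r)^{-p^*}$ on $\{t-r>1,\ t<2r\}$, with $p^*=\tfrac12((n+A-1)p-(n+A+1))$. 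Then, rather than iterating, the paper sets $f(y)=\inf_{\Omega_y}\rho^{\frac{n+A-1}{2}}(s-\rho)^{p^*}u$, derives the single integral inequality $f(y)\ge C_2\int_1^y(1-\eta/y)\eta^{-pp^*}f(\eta)^p\,\d\eta$, and invokes Proposition~\ref{pr:OII} (the black-box lemma from \cite{MR3953034}) with $\kappa=pp^*=\tfrac12 h_S(n+A,p)+1$. Your iterative scheme would of course reprove that proposition, so the approaches are equivalent in the end, but the paper's route is shorter and makes the connection to \cite{MR3953034} explicit.

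Second, and more substantively: you invoke ``finite-speed-of-propagation'' and ``$\supp$ near the origin'', but the whole point of Theorem~\ref{th:U_l} is that the data are \emph{not} assumed compactly supported. The argument never localizes via light cones from compact data; instead it exploits only that $g\ge c_0$ on some interval $[a,b]$ and works in the characteristic coordinates $s\pm\rho$ inside the region $\Omega_y=\{s\le 2\rho,\ s-\rho\ge y\}$, where indeed $\rho\gtrsim 1$ so the weights are harmless. You also glossed over the passage from $\int_{S^{n-1}}|u|^p$ to $\big|\int_{S^{n-1}}u\big|^p$: the paper handles this by Jensen's inequality (valid once $\int_{S^{n-1}}u\ge 0$), which is what justifies reducing to the radial case $B=\tilde B$. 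Fixing these two points your sketch would go through, but the paper's use of $f(y)$ and Proposition~\ref{pr:OII} is the cleaner packaging.
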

 
To prove this theorem, we introduce a proposition.
\begin{prop}[Lemma 3.2 of \cite{MR3953034}]\label{pr:OII}
Let $C_1$, $C_2>0$, $\alpha$, $\beta\ge 0$, $\kappa\le 1$,
$\varepsilon\in (0,1]$, and $p>1$.
Suppose that $f(y)$ satisfies
$$
f(y)\ge C_1\varepsilon^{\alpha},\quad
f(y)\ge C_2\varepsilon^{\beta}\int_{1}^{y}\left(1-\frac{\eta}{y}\right)
\frac{f(\eta)^{p}}{\eta^{\kappa}}\d \eta,\quad y\ge 1.
$$
Then, $f(y)$ blows up in a finite time $T_*(\varepsilon)$. Moreover,
there exists a constant $C^*=C^*(C_1,C_2,p,\kappa)>0$ such that
$$T_*(\varepsilon)\le \left\{ \begin{array}{ll}
\exp(C^*\varepsilon^{-\{(p-1)\alpha+\beta\}})
& \mbox{if} \hspace{2mm} \kappa=1, \\
C^*\varepsilon^{-\{(p-1)\alpha+\beta\}/(1-\kappa)}
& \mbox{if} \hspace{2mm} \kappa<1.
\end{array} \right.$$
\end{prop}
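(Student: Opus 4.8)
This statement is a Kato-type blow-up lemma for a weighted nonlinear integral inequality, and the plan is to convert it into a superlinear ordinary differential inequality (ODI) with vanishing data, and then extract finite-time blow-up together with the lifespan bound by a comparison/scaling argument, keeping careful track of the dependence on $\varepsilon$. The first step is to linearize the integral operator. Writing the kernel as $1-\eta/y=(y-\eta)/y$ and setting
\begin{Eq*}
M(y):=\int_1^y(y-\eta)\frac{f(\eta)^p}{\eta^\kappa}\d\eta,
\end{Eq*}
the second hypothesis becomes $f(y)\ge C_2\varepsilon^\beta M(y)/y$, while $M$ is twice differentiable with $M(1)=M'(1)=0$, $M'(y)=\int_1^y f(\eta)^p\eta^{-\kappa}\d\eta$ and $M''(y)=f(y)^p y^{-\kappa}$. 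Feeding the two assumptions into the formula for $M''$ turns them into the pair of differential inequalities $M''(y)\ge C_1^p\varepsilon^{p\alpha}y^{-\kappa}$ (the pointwise seed) and $M''(y)\ge C_2^p\varepsilon^{p\beta}y^{-(p+\kappa)}M(y)^p$ (the nonlinear engine), both with zero data at $y=1$; the superlinearity $p>1$ is what will drive blow-up.

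Second, I would use the linear inequality to seed the argument. Integrating $M''\ge C_1^p\varepsilon^{p\alpha}y^{-\kappa}$ twice from $y=1$ yields, past a fixed reference point, a lower bound $M(y)\gtrsim\varepsilon^{p\alpha}\Theta_\kappa(y)$, where (using $\kappa\le 1$, so the integral does not decay) $\Theta_\kappa(y)\approx y^{2-\kappa}$ when $\kappa<1$ and $\Theta_1(y)\approx y\ln y$ when $\kappa=1$. Since $M''\ge0$, the function $M$ is nonnegative, increasing and convex, so this provides legitimate positive starting data for the nonlinear inequality, and it is here that the scale $\varepsilon^{p\alpha}$ (with $\alpha\ge0$, $\varepsilon\le1$) enters.

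Third, the lifespan exponent is pinned down by when the nonlinear engine overtakes the seed. Inserting $M\approx\varepsilon^{p\alpha}\Theta_\kappa(y)$ and equating the two lower bounds for $M''$, namely $\varepsilon^{p\beta}y^{-(p+\kappa)}M^p\approx\varepsilon^{p\alpha}y^{-\kappa}$, one finds the threshold condition $y^{1-\kappa}\approx\varepsilon^{-\{(p-1)\alpha+\beta\}}$; the combination $(p-1)\alpha+\beta$ emerges precisely from this balance of the two $\varepsilon$-scales. When $\kappa<1$ this is a power condition giving $T_*\approx\varepsilon^{-\{(p-1)\alpha+\beta\}/(1-\kappa)}$, while at $\kappa=1$ the weight enters only logarithmically, so after the substitution $s=\ln y$, $\Phi(s)=M(y)$ (which recasts the ODI in the autonomous-type form $\Phi''-\Phi'\ge C_2^p\varepsilon^{p\beta}e^{-(p+\kappa-2)s}\Phi^p$ with $\Phi(0)=\Phi'(0)=0$) the threshold reads $s\approx\varepsilon^{-\{(p-1)\alpha+\beta\}}$, i.e. $T_*\approx\exp(C\varepsilon^{-\{(p-1)\alpha+\beta\}})$. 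Beyond the threshold the nonlinearity dominates and a standard superlinear comparison (or a slicing iteration producing lower bounds whose logarithm grows geometrically) forces $M$, hence $f$, to become infinite at a finite $y=T_*$ of the stated size.

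I expect the main obstacle to be the nonautonomous weight $y^{-(p+\kappa)}$: multiplying the ODI by $M'$ leaves an uncontrolled $y$-dependent factor, so no one-line energy integration closes the estimate, and the blow-up must instead be obtained by the more delicate comparison or iteration. The genuinely careful part is upgrading the heuristic overtaking balance above into a rigorous bound while keeping all constants uniform through the iteration, so as to isolate the exact exponent $(p-1)\alpha+\beta$ and to reproduce the $\kappa=1$ versus $\kappa<1$ dichotomy without loss. Since this is exactly Lemma~3.2 of \cite{MR3953034}, I would follow their scaling argument for the quantitative form of the lifespan bound.
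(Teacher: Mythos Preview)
The paper does not prove this proposition; it is quoted verbatim as Lemma~3.2 of \cite{MR3953034} and used as a black box in the proof of \Th{th:U_l}. So there is no ``paper's own proof'' to compare against.

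That said, your sketch is a sound outline of the standard Kato-type argument behind such lemmas: setting $M(y)=\int_1^y(y-\eta)f(\eta)^p\eta^{-\kappa}\d\eta$ to obtain $M''=f^p y^{-\kappa}$ with $M(1)=M'(1)=0$, seeding with the constant lower bound $f\ge C_1\varepsilon^\alpha$, and then feeding the result into the nonlinear inequality $M''\gtrsim\varepsilon^{p\beta}y^{-(p+\kappa)}M^p$ is exactly the right structure. The balance computation you describe correctly isolates the exponent $(p-1)\alpha+\beta$ and the $\kappa<1$ versus $\kappa=1$ dichotomy. The one place where your write-up is more heuristic than proof is the final step (``a standard superlinear comparison or slicing iteration''); to make this rigorous you would either run the usual successive-substitution iteration producing lower bounds $M(y)\gtrsim c_j y^{a_j}(\ln y)^{b_j}$ with geometrically growing exponents, or invoke a comparison with an explicit blowing-up subsolution of the ODI. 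Either route is routine here, and since you already plan to follow \cite{MR3953034} for the quantitative part, your proposal is adequate.
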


\begin{proof}[Proof of \Th{th:U_l}.]
Firstly, we notice that the $K(t,r,\rho)$ in \cref{eq:u_E} is nonnegative, and that by \emph{Jensen}'s inequality, we have
\begin{Eq*}
\frac{\tilde B-B}{r^2}\int_{S^{n-1}} u\d\nu+\int_{S^{n-1}}|u|^p\d\nu\geq \kl|S^{n-1}\kr|^{1-p}\kl|\int_{S^{n-1}}u\d\nu\kr|^p 
\end{Eq*}
as long as $ \int_{S^{n-1}}u\d\nu\geq 0$. Since that, we only need to deal with the case $B=\tilde B$ and $u$ and $g$ is spherically symmetric. The remaining cases can be proven by the comparison principle.

Secondly, by the assumption on $g$, we know that there must exist a positive constant $c_0$ and some region $[a,b]$, such that $g\geq c_0$ when $r\in[a,b]$. Without loss of generality we assume $a=1/2,b=1$.
Then, for $t<r<t+1/2$, $t+r>1$, by \cref{eq:u_E} we have
\begin{Eq*}
u(t,r)\gtrsim& \varepsilon r^{-\frac{n+A-1}{2}}\int_{r-t}^{r+t}\rho^{-A}(\rho+r-t)^{A} g(\rho)\rho^\frac{n+A-1}{2}\d\rho\\
\gtrsim& \varepsilon r^{-\frac{n+A-1}{2}}\int_{\frac{1}{2}}^{1}\rho^{\frac{n+A-1}{2}} g(\rho)\d\rho\\
\gtrsim& \varepsilon r^{-\frac{n+A-1}{2}}.
\end{Eq*}
By \cref{eq:u_E} again, for $0<t<2r$, $t-r>1$ we have
\begin{Eq*}
u(t,r)\gtrsim& r^{-\frac{n+A-1}{2}}\int_{0}^t\int_{|r-t+s|}^{r+t-s}\kl(\frac{\rho+r+s-t}{\rho}\kr)^{A}|u|^p(s,\rho)\rho^{\frac{n+A-1}{2}}\d\rho\d s.
\end{Eq*}
Noticing that $\Sigma:=\{(s,\rho):0<\rho-s<1/2,t-r<s+\rho<t+r\}$ is a subset of the integral region, we have
\begin{Eq*}
u(t,r)\gtrsim \varepsilon^p r^{-\frac{n+A-1}{2}}\iint_{\Sigma}(\rho+r+s-t)^{A}\rho^{-\frac{n+A-1}{2}p+\frac{n-A-1}{2}}\d\rho\d s.
\end{Eq*}
Using the change of variable $\xi=s+\rho$, $\eta=s-\rho$, we obtain
\begin{Eq*}
u(t,r)\gtrsim&\varepsilon^p r^{-\frac{n+A-1}{2}}\int_{t-r}^{t+r}\int_{-\frac{1}{2}}^0(\xi+r-t)^{A}(\xi-\eta)^{-\frac{n+A-1}{2}p+\frac{n-A-1}{2}}\d\eta\d\xi\\
\gtrsim&\varepsilon^p r^{-\frac{n+A-1}{2}}\int_{t-r}^{t+r}(\xi+r-t)^{A}\xi^{-\frac{n+A-1}{2}p+\frac{n-A-1}{2}}\d\xi.
\end{Eq*}
Since $t<2r$, we have $t+r>3(t-r)$, so that
\begin{Eq*}
u(t,r)\gtrsim&\varepsilon^pr^{-\frac{n+A-1}{2}}\int_{t-r}^{3(t-r)}(\xi+r-t)^{A}\xi^{-\frac{n+A-1}{2}p+\frac{n-A-1}{2}}\d\xi\\
\gtrsim&\varepsilon^pr^{-\frac{n+A-1}{2}}(t-r)^{-\frac{n+A-1}{2}p+\frac{n-A-1}{2}}\int_{t-r}^{3(t-r)}(\xi+r-t)^{A}\d\xi\\
\gtrsim&\varepsilon^pr^{-\frac{n+A-1}{2}}(t-r)^{-\frac{n+A-1}{2}p+\frac{n+A+1}{2}}.
\end{Eq*}

Since it is so, we set $p^*:=\frac{1}{2}\big((n+A-1)p-(n+A+1)\big)$ and consider
\begin{Eq*}
f(y):=&\inf_{(s,\rho)\in\Omega_y}\rho^{\frac{n+A-1}{2}}(s-\rho)^{p^*}u(s,\rho),\\
\Omega_y:=&\{(s,\rho):0\leq s\leq 2\rho, s-\rho\geq y\}.
\end{Eq*}
By the discussion before, we firstly find that $f(y)\geq C_1\varepsilon^p$ for any $y>1$ and some positive constant $C_1$. In addition, by \cref{eq:u_E} again we find that, for any $(t,r)\in\Omega_y$ with $y\geq 1$, if we set
\begin{Eq*}
\tilde \Omega_{z,\eta}:=\{(s,\rho):\rho\geq\eta,s+\rho\leq 3\eta, s-\rho\geq z\}\subset\Omega_z,\quad z\geq1,\quad\eta\geq1,
\end{Eq*}
we have
\begin{Eq*}
u(t,r)\gtrsim&r^{-\frac{n+A+1}{2}}\iint_{\tilde \Omega_{1,t-r}}\kl(\frac{\rho+r+s-t}{\rho}\kr)^{A}|u|^p(s,\rho)\rho^{\frac{n+A-1}{2}}\d\rho\d s\\
\gtrsim&r^{-\frac{n+A-1}{2}}\iint_{\tilde \Omega_{1,t-r}}\frac{(\rho+r+s-t)^A}{\rho^{p^*+A+1}}\frac{f(s-\rho)^p}{(s-\rho)^{pp^*}}\d\rho\d s.
\end{Eq*}
Here $\rho+r+s-t=2\rho+r-t+(s-\rho)>t-r$ for $(s,\rho)\in \tilde \Omega_{1,t-r}$. Changing the variables by $\eta=s-\rho$, $\rho=\rho$, we have
\begin{Eq*}
u(t,r)\gtrsim&\frac{(t-r)^A}{r^{\frac{n+A-1}{2}}}\int_1^{t-r}\int_{t-r}^{\frac{3(t-r)-\eta}{2}}\frac{1}{\rho^{p^*+A+1}}\frac{f(\eta)^p}{\eta^{pp^*}}\d\rho\d\eta\\
\gtrsim&\frac{1}{r^{\frac{n+A-1}{2}}(t-r)^{p^*}}\int_1^{t-r}\kl(1-\frac{\eta}{t-r}\kr)\frac{f(\eta)^p}{\eta^{pp^*}}\d\eta.
\end{Eq*}
This shows 
\begin{Eq*}
f(y)\geq C_2 \int_1^{y}\kl(1-\frac{\eta}{y}\kr)\frac{f(\eta)^p}{\eta^{pp^*}}d\eta
\end{Eq*}
with $y>1$ and some constant $C_2$. By \Pr{pr:OII} with $\kappa=pp^*=\frac{h(n+A,p)}{2}+1$ and the relation between $u$ and $f$, we finish the proof.

\end{proof}

\subsection*{Acknowledgment}
The authors would like to thank the anonymous referee for the careful reading and valuable comments.
The first author was supported by NSFC 11671353 and NSFC 11971428.
The second author was
partially supported by Grant-in-Aid for Science Research
JSPS  (No.19H01795 and No.16H06339).


\begin{thebibliography}{10}

\bibitem{MR742415}
Pierre Baras and Jerome~A. Goldstein.
\newblock The heat equation with a singular potential.
\newblock {\em Trans. Amer. Math. Soc.}, 284(1):121--139, 1984.

\bibitem{MR2003358}
Nicolas Burq, Fabrice Planchon, John~G. Stalker, and A.~Shadi Tahvildar-Zadeh.
\newblock Strichartz estimates for the wave and {S}chr\"{o}dinger equations
  with the inverse-square potential.
\newblock {\em J. Funct. Anal.}, 203(2):519--549, 2003.

\bibitem{MR2106340}
Nicolas Burq, Fabrice Planchon, John~G. Stalker, and A.~Shadi Tahvildar-Zadeh.
\newblock Strichartz estimates for the wave and {S}chr\"{o}dinger equations
  with potentials of critical decay.
\newblock {\em Indiana Univ. Math. J.}, 53(6):1665--1680, 2004.

\bibitem{MR3953034}
Vladimir Georgiev, Hideo Kubo, and Kyouhei Wakasa.
\newblock Critical exponent for nonlinear damped wave equations with
  non-negative potential in 3{D}.
\newblock {\em J. Differential Equations}, 267(5):3271--3288, 2019.

\bibitem{MR1481816}
Vladimir Georgiev, Hans Lindblad, and Christopher~D. Sogge.
\newblock Weighted {S}trichartz estimates and global existence for semilinear
  wave equations.
\newblock {\em Amer. J. Math.}, 119(6):1291--1319, 1997.

\bibitem{ikeda1709life}
M~Ikeda and M~Sobajima.
\newblock Life-span of blowup solutions to semilinear wave equation with
  space-dependent critical damping, to appear in funkcialaj ekvacioj.
\newblock {\em arXiv preprint arXiv:1709.04401}.

\bibitem{MR2589664}
Ryo Ikehata, Grozdena Todorova, and Borislav Yordanov.
\newblock Critical exponent for semilinear wave equations with space-dependent
  potential.
\newblock {\em Funkcial. Ekvac.}, 52(3):411--435, 2009.

\bibitem{MR3906250}
Norisuke Ioku and Takayoshi Ogawa.
\newblock Critical dissipative estimate for a heat semigroup with a quadratic
  singular potential and critical exponent for nonlinear heat equations.
\newblock {\em J. Differential Equations}, 266(4):2274--2293, 2019.

\bibitem{MR535704}
Fritz John.
\newblock Blow-up of solutions of nonlinear wave equations in three space
  dimensions.
\newblock {\em Manuscripta Math.}, 28(1-3):235--268, 1979.

\bibitem{KubOht05}
H.~Kubo and M.~Ohta.
\newblock {\em On the global behavior of classical solutions to coupled systems
  of semilinear wave equations}, volume 159 of {\em Operator Theory Adv. and
  Appl.}
\newblock Birkhäuser Verlag, 2005.

\bibitem{MR0350075}
N.~N. Lebedev.
\newblock {\em Special functions and their applications}.
\newblock Dover Publications, Inc., New York, 1972.
\newblock Revised edition, translated from the Russian and edited by Richard A.
  Silverman, Unabridged and corrected republication.

\bibitem{MR1408499}
Hans Lindblad and Christopher~D. Sogge.
\newblock Long-time existence for small amplitude semilinear wave equations.
\newblock {\em Amer. J. Math.}, 118(5):1047--1135, 1996.

\bibitem{MR0493420}
Michael Reed and Barry Simon.
\newblock {\em Methods of modern mathematical physics. {II}. {F}ourier
  analysis, self-adjointness}.
\newblock Academic Press [Harcourt Brace Jovanovich, Publishers], New
  York-London, 1975.

\bibitem{MR744303}
Thomas~C. Sideris.
\newblock Nonexistence of global solutions to semilinear wave equations in high
  dimensions.
\newblock {\em J. Differential Equations}, 52(3):378--406, 1984.

\bibitem{MR3959451}
Toshiyuki Suzuki.
\newblock Scattering theory for semilinear {S}chr\"{o}dinger equations with an
  inverse-square potential via energy methods.
\newblock {\em Evol. Equ. Control Theory}, 8(2):447--471, 2019.

\bibitem{MR3013062}
Chengbo Wang and Xin Yu.
\newblock Recent works on the {S}trauss conjecture.
\newblock In {\em Recent advances in harmonic analysis and partial differential
  equations}, volume 581 of {\em Contemp. Math.}, pages 235--256. Amer. Math.
  Soc., Providence, RI, 2012.

\bibitem{MR2195336}
Borislav~T. Yordanov and Qi~S. Zhang.
\newblock Finite time blow up for critical wave equations in high dimensions.
\newblock {\em J. Funct. Anal.}, 231(2):361--374, 2006.

\bibitem{MR1233659}
Yi~Zhou.
\newblock Life span of classical solutions to {$\square u=|u|^p$} in two space
  dimensions.
\newblock {\em Chinese Ann. Math. Ser. B}, 14(2):225--236, 1993.
\newblock A Chinese summary appears in Chinese Ann. Math. Ser. A {{\bf{1}}4}
  (1993), no. 3, 391--392.

\end{thebibliography}

\end{document}